\numberwithin{equation}{section}
\newcommand{\F}{\mathscr{F}}
\newcommand{\G}{\mathscr{G}}
\newcommand{\eps}{\varepsilon}
\renewcommand{\epsilon}{\eps}
\renewcommand{\phi}{\varphi}
\newcommand{\N}{\mathbb{N}}
\newcommand{\R}{\mathbb{R}}
\newcommand{\RN}{\R^N}
\newcommand{\Snmu}{\mathbb{S}^{N-1}}
\newcommand{\minusz}{\setminus\{0\}}
\newcommand{\LN}{\mathscr{L}^N}
\newcommand{\de}{\,\mathrm{d}}
\newcommand{\weak}{\rightharpoonup}
\newcommand{\h}{\mathscr{H}}
\newcommand{\D}{\mathfrak{D}}
\theoremstyle{plain}
\newtheorem{theorem}{Theorem}[section]
\newaliascnt{proposition}{theorem}
\newtheorem{proposition}[proposition]{Proposition}
\newaliascnt{corollary}{theorem}
\newaliascnt{lemma}{theorem}
\newtheorem{lemma}[lemma]{Lemma}
\newaliascnt{definition}{theorem}
\theoremstyle{definition}
\newaliascnt{remark}{theorem}
\newtheorem{remark}[remark]{Remark}
\title[Second-order asymptotics of fractional Gagliardo seminorms]{Second-order asymptotics of fractional Gagliardo seminorms  as $s\to1^-$ and convergence of the associated  gradient flows}
\author[A. Kubin]{Andrea Kubin}
\author[V. Pagliari]{Valerio Pagliari}
\author[A. Tribuzio]{Antonio Tribuzio}
\begin{document}
	\maketitle
			\begin{abstract}
	We study the second-order asymptotic expansion of the $s$-fractional Gagliardo seminorm as $ s\to1^-$ in terms of a higher order nonlocal functional.
	We prove a Mosco-convergence result for the energy functionals and that the $L^2$-gradient flows of the energies converge 
	to the $L^2$-gradient flows of the Mosco-limit.
		\vskip5pt
		\noindent
		\textsc{Keywords:} Gagliardo seminorms; $\Gamma$-convergence; second-order expansion; fractional gradient-flows
		\vskip5pt
		\noindent
		\textsc{AMS subject classifications: } 49J45, 35R11, 35B40, 45E10, 26A33
	\end{abstract}

	\tableofcontents
	
	\section{Introduction}

In recent years, the interest in nonlocal problems has grown significantly in the mathematical community.
This is due to the fact that, by their nature, nonlocal energies require less regularity to be well-defined and hence have a broader applicability than their local equivalent.
In these terms, \emph{convolution-type} functionals are by now considered as a more general counterpart of elastic-type energies appearing, for instance, in mechanics, micromagnetism and related fields.
A challenging question is the study of the asymptotic behaviour of these nonlocal energies as the \emph{interaction horizon} between two points goes to zero.

The prototypical nonlocal energy of convolution-type is the squared \emph{$s$-Gagliardo seminorm}
\begin{equation}\label{eq:funct-intro}
|u|_{H^s(\Omega)}^2:=\int_{\Omega}\int_{\Omega} \frac{|u(y) - u(x)|^2}{|y-x|^{N+2s}} \de y \de x
\end{equation}
where $N\in\N\setminus\{0\}$, $s\in(0,1)$, $u\in L^2(\Omega)$ and $\Omega\subset\RN$ is an open, bounded and Lipschitz regular set.
This has been proved to converge as $s\to1^-$ (after a suitable rescaling) to a multiple of the squared Sobolev seminorm. 
The first result in this direction is given in the seminal work \cite{bourgain2001another} where it is proved
$$
\lim_{s\to1^-}(1-s)|u|_{H^s(\Omega)}^2= \frac{\omega_N}{2} \|\nabla u\|_{L^2(\Omega)}^2
$$
in the sense of pointwise convergence, where $\omega_N$ is the volume of the $N$-dimensional unit ball.
Such a result is indeed proved for every exponent $p\in(1,\infty)$ and it is complemented by a treatment of general nonlocal functionals of convolution-type under suitable decay conditions on the kernels.
The analogous result in the case of linear growth-conditions, \emph{i.e.}\ $p=1$, was studied in \cite{Dav02}, where it is shown that the pointwise limit is exactly (a multiple of) the total variation norm of $\nabla u$.
These results have been proved to hold in the sense of $\Gamma$-convergence in \cite{Pon04prim} and generalized to the case of arbitrary open sets in \cite{LS11}.

In this paper, we are interested in the analysis of the second order expansion, as $s\to1^-$, of the rescaled $s$-Gagliardo seminorm
$$
G_s(u):=(1-s)|u|_{H^s(\RN)}^2,
$$
restricted to functions supported in an open, bounded, Lipschitz set $\Omega\subset\RN$.
To be precise, we prove that the \emph{rate functionals}
$$
\G_s^1(u):= \frac{G_1(u)-G_s(u)}{1-s}, \quad \text{where } G_1(u):=\frac{\omega_N}{2}\|\nabla u\|_{L^2(\RN)}^2,
$$
$\Gamma$-converge as $s\to1^-$ with respect to the strong $L^2$-topology to the nonlocal limit energy
$$
\G^1(u) := \int_{\R^N}\int_{\R^N}\frac{|\nabla u(x)\cdot h|^2\chi_{B(0,1)}(h)-|u(x+h)-u(x)|^2}{|h|^{N+2}}\de h\de x.
$$
Our limit result (cf.\ Theorem \ref{thm:main}), supported by the $L^2$-equicoercivity of the $\G_s^1$ energies, upgrades to a \emph{Mosco-limit}.
We complement this analysis by characterizing the domain of $\G^1$ (cf.\ Lemma \ref{lem:dom}), exploiting its Fourier representation, which consists of the functions $u\in H^1_0(\Omega)$ whose Fourier transform $\hat u$ complies with
$$
\int_{\RN} |\xi|^2\log(1+|\xi|^2)|\hat u(\xi)|^2\de\xi<+\infty.
$$

Our results conclude the asymptotic analysis of the $s$-Gagliardo seminorm up to the second order in the singular limit $s\to1^-$.
To the best of the authors' knowledge, our limit result introduces the \emph{new} quadratic, nonlocal energy $\G^1$, which is close to other $L^1$-based analogs appearing in the second-order study of nonlocal perimeters \cite{cesaroni2020second,KS2023}.
As it is apparent from the characterization of its domain, we can interpret $\G^1$ as an $L^2$-based energy of differentiability order "logarithmically larger" than one.
This can also be na{\"i}vely explained by testing out the energy on H{\"o}lder regular functions since
the singularity of both terms in $\G^1$ (as $|h|\to0$) cancels in the difference if $u$ is sufficiently differentiable.
Indeed, taking $u\in C^{1,\alpha}_c(\Omega)$, by Taylor's expansion and the Fundamental Theorem of Calculus, it holds that $u(x+h)=u(x)+\nabla u(x)\cdot h + O(|h|^{1+\alpha})$ hence 
$$
\G^1(u) = \int_{\R^N}\int_{B(0,1)}\frac{(\nabla u(x)\cdot h)O(|h|^{1+\alpha})}{|h|^{N+2}} \de h-\int_{\R^N}\int_{B(0,1)^c}\frac{|u(x+h)-u(x)|^2}{|h|^{N+2}}\de h\de x.
$$
which is finite for any $\alpha>0$.

The second step in our analysis is the study of the $L^2$-gradient flows associated to $\G_s^1$.
As one may expect from the static asymptotic analysis, the \emph{parabolic-type} flows governed by the "gradient" of $\G_s^1$ converge as $ s\to1^-$ to the gradient flow of $\G^1$  (see Theorem \ref{convheat1ordsto0}).
In this context, thanks to the \emph{$\lambda$-convexity} of the functionals involved, the correct notion of gradient of the energies $\G_s^1$ and $\G^1$ is that of \emph{first variation}.
For regular test functions $\phi \in C^\infty_c(\Omega)$, the first variation of $\G_s^1$ is
\begin{equation*}
\frac{ d }{d t} \G_s^1(u+t\phi)|_{t=0}=\omega_N\Big\langle u,\frac{(-\Delta)-\sigma_{N,s}(-\Delta)^s}{1-s}\phi\Big\rangle,
\end{equation*}
where $ (-\Delta)^s$ is the $s$-fractional Laplace operator and $\sigma_{N,s}>0$ is a scaling factor converging to $1$ as $s\to1^-$.
At least formally, this converges to the first variation of the $\Gamma$-limit $\G^1$ which introduces a \emph{new} operator defined by
\begin{multline*}
\mathfrak{L} \phi(x):= -2N \omega_N \phi(x)+ 4\int_{B(0,1)^c} \frac{\phi(x+h)}{\vert h \vert^{N+2}}\de h \\
- 2\int_{B(0,1)} \frac{2\phi(x)-\phi(x-h)-\phi(x+h)+ h \cdot\nabla^2 \phi(x)h }{\vert h \vert^{N+2}} \de h,
\end{multline*}
for every $\varphi\in C^\infty_c(\Omega)$.
This stability result is obtained as an application of the approach (in general Hilbert spaces) developed in \cite{crismale2023variational}, where it is proved that uniform convexity assumptions (\emph{i.e.}\ $\lambda$-convexity and $\lambda$-positivity) are sufficient conditions to guarantee that $\Gamma$-convergence commutes with the gradient flow structure, exploiting the minimizing-moments and energy dissipation approaches \cite{ambrosio2005gradient,CG2012,sandier2004gamma}.

The literature on local limits of fractional-type functionals, in both the critical limits $s\to0$ and $s\to1$, is by now very vast.
Among the many results, we mention the pivotal paper \cite{ambrosio2011gamma} in which the asymptotic behavior of (relative) \emph{fractional perimeters} is studied in terms of $\Gamma$-convergence (see also \cite{de2022core} for analogous results in the supercritical case $s\to1^+$ and \cite{kubin2024characterization} in which sets with fractional perimeters are characterized via fractional heat flows).
In this context, also results on the second order asympotics can be found in \cite{cesaroni2020second} for the classical fractional perimeter and in \cite{KS2023} for more general convolution energies related to the study of thin ferromagnetic films
(see also \cite{CNP20} for energies associated with $L^1$ kerneles with finite second moment).
The first result regarding the limit as $s \to 0^+$ is \cite{maz2002bourgain}, in which the authors show that the squared $s$-Gagliardo seminorm (rescaled by $s$) pointwise converge to (a multiple of) the squared $L^2$-norm (see also \cite{dipierro2013asymptotics,carbotti2022asymptotics} for similar results in the context of $s$-fractional perimeters).
This asymptotic analysis has been developed also in terms of $\Gamma$-convergence in \cite{crismale2023variational} for the standard $s$-Gagliardo seminorm and in \cite{de20190} for fractional perimeters both in the first and in the second order.
In this latter case, these studies provided two new nonlocal functionals, referred to as \emph{$0$-fractional perimeter} in \cite{de20190} and its corresponding $L^2$-based version in \cite[Theorem 1.4]{crismale2023variational}.
Many variants of the problem have also been considered, for instance incorporating anisotropies \cite{ludwig2014anisotropic}, or having different interaction potentials \cite{de2023parabolic,AMRT2009,MRT2016,BP19}.
For other related nonlocal-to-local results see also \cite{AAB22,MQ15,SM19,DDFG2024,CKS2023} and the references therein.

The paper is structured as follows: after recalling some known results in the literature in Section \ref{sec:subcrit}, in Section \ref{sec:main} we study the Mosco-convergence of the
functionals $\G^1_s$ as $s\to1^-$ to $\G^1$. Then, in Section \ref{convgradflow}, we prove
the convergence of the gradient flows of $\G_s^1$ to the gradient flows of $\G^1$.

\medskip

	\noindent
	{\bf Notation of the paper:}
	For $N\in \N\minusz$, our analysis is settled in the $N$-dimensional space $\RN$.
	We denote the Euclidean inner product between $a,b\in\RN$ as $a\cdot b$ and the corresponding norm as $|a|$.
	We denote by $B(x,r)$ the open ball in $\RN$
	of center~$x$ and radius~$r$.
	We write $B(x,r)^c$ for the complement of $B(x,r)$, while 
	for the topological boundary of $B(0,1)$ we use the symbol $\Snmu$ or $\partial B(0,1)$.
	We denote by $\LN$ the $N$-dimensional Lebesgue measure, and for $ k \in \{0,\dots, N-1\}$ we denote $\mathcal{H}^k$ the $k$-dimensional Hausdorff measure.
	We set $\omega_N\coloneqq \LN(B(0,1))$, so that
	the $(N-1)$-dimensional Hausdorff measure of $\Snmu$ equals $N\omega_N$. With $\Omega $ we denote a open bounded subset of $\R^N$ with Lipschitz continuous boundary.
	Throughout the paper, we write
	$C(*,\cdots,*)$ to indicate a  generic positive constant
	that depends only on $*,\cdots,*$ and
	that may change from line to line.

\section{The subcritical rate of convergence of the Gagliardo seminorms}\label{sec:subcrit}

In this section we analyze the asymptotic behaviour of the scaled $s$-Gagliardo seminorms as $s\to1^-$.
Let $s\in (0,1)$ we define the scaled $s$-Gagliardo seminorm of a measurable function $u : \R^N \rightarrow \R$ as
\begin{equation}\label{eq:gagliardo}
	G_s(u)\coloneqq
	(1-s) \int_{\RN} \int_{\RN} \frac{| u(y)-u(x) |^2}{| x-y |^{N+2s}}\de y \de x.
\end{equation}
For simplicity, we will consider compactly supported functions but the same analysis can be performed by using local convergences.
Let $\Omega$ be a open bounded subset of $\R^N$ with Lipschitz continuous boundary.
In what follows, for every $v\in L^2(\Omega)$ we denote without relabeling the $L^2(\R^N)$ function which is extended to zero outside $\Omega$.
	
\subsection{Preliminary (known) results and first-order limits}
	
The asymptotic analysis of the family $\{G_{s_n}\}_{n \in \N}$ as $s_n\to 0^+$ and as $s_n\to1^-$ in the sense of $\Gamma$-convergence has already been carried out in the literature.
In this respect, we recall the following results.

\begin{theorem}[Theorem 1.2 in \cite{crismale2023variational}]
Let $\{s_n\}_{n \in \N}$ be a sequence such that $s_n \to 0^+$ as $ n \to + \infty$. Then, the following hold.
\begin{itemize}
	\item[(i)] (weak compactness)
	Let $\{u_n\}_{n \in \N}\subset L^2(\Omega)$ satisfy
	\[
	s_nG_{s_n}(u_n) \leq M
	\]
	for some $M\geq 0$ independent of $n$.
	Then, there exists $u \in L^2(\Omega)$ such that, up to subsequences, $u_n $ converge to $u$ in the weak topology of $L^2(\Omega)$.
	\item[(ii)] (liminf inequality) For every $u\in L^2(\Omega)$ and for every $\{u_n\}_{n \in \N}\subset L^2(\Omega)$ converging to $u$ in the weak $L^2(\Omega)$ topology, it holds
$$
\frac{N\omega_N}{2}\|u\|_{L^2(\Omega)}^2 \le \liminf_{n\to+\infty} s_nG_{s_n}(u_n).
$$
	\item[(iii)] (limsup inequality) For every $u\in L^2(\Omega)$ there exists a sequence $\{u_n\}_{n \in \N}\subset L^2(\Omega)$ converging to $u$ in the weak $L^2(\Omega)$ topology such that
$$
\frac{N\omega_N}{2}\|u\|_{L^2(\Omega)}^2 = \lim_{n\to+\infty} s_nG_{s_n}(u_n).
$$
\end{itemize}
\end{theorem}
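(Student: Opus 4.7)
The plan is to exploit that every $u\in L^2(\Omega)$ is tacitly extended by zero on $\R^N\setminus\Omega$, which yields the exact decomposition
\[
G_s(u) = (1-s)\bigl[A_s(u)+2\,B_s(u)\bigr],
\]
where $A_s(u):=\int_\Omega\int_\Omega|u(y)-u(x)|^2|y-x|^{-N-2s}\de y\de x\ge 0$ and $B_s(u):=\int_\Omega|u(x)|^2K_s(x)\de x$ with tail kernel $K_s(x):=\int_{\Omega^c}|y-x|^{-N-2s}\de y$. The decomposition is obtained by splitting $\R^N\times\R^N$ into the four quadrants $\Omega\times\Omega$, $\Omega\times\Omega^c$, $\Omega^c\times\Omega$, $\Omega^c\times\Omega^c$ (the last being zero and the two mixed ones coinciding by symmetry). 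The whole analysis then reduces to the elementary identity
\[
s\int_{B(0,R)^c}|h|^{-N-2s}\de h=\frac{N\omega_N}{2}R^{-2s},
\]
combined with the sandwich inclusions $B(x,2R_0)^c\subset\Omega^c\subset B(x,d(x,\partial\Omega))^c$ valid for $x\in\Omega$ provided $\Omega\subset B(x_0,R_0)$, which together give the pointwise asymptotics of $sK_s(x)$ as $s\to 0^+$ in the interior of $\Omega$.

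For the compactness part (i), the inclusion $B(x,2R_0)^c\subset\Omega^c$ (uniform for $x\in\Omega$) delivers the lower bound $sK_s(x)\ge\frac{N\omega_N}{2}(2R_0)^{-2s}\ge c_\Omega>0$ for all $s\in(0,1/2]$ and $x\in\Omega$. Discarding the nonnegative term $A_{s_n}$ from the energy bound $s_nG_{s_n}(u_n)\le M$ then yields $\|u_n\|_{L^2(\Omega)}^2\le CM$, and weak sequential compactness in $L^2(\Omega)$ follows from the Banach--Alaoglu theorem. For the liminf (ii), I would localize the cross-term to the interior subset $\Omega_\delta:=\{x\in\Omega:d(x,\partial\Omega)>\delta\}$. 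On $\Omega_\delta$, the upper bound $sK_s(x)\le\frac{N\omega_N}{2}\delta^{-2s}$ combined with the lower bound above shows that the weights $\phi_{s_n}(x):=2s_n(1-s_n)K_{s_n}(x)$ are uniformly bounded and converge uniformly on $\Omega_\delta$ to the positive constant displayed in the theorem statement. Weak-$L^2$ lower semicontinuity of the quadratic form $v\mapsto\frac{N\omega_N}{2}\|v\|_{L^2(\Omega_\delta)}^2$, combined with this uniform convergence (which absorbs the discrepancy $\phi_{s_n}-\frac{N\omega_N}{2}$ against the $L^2$-bounded $\{u_n\}$), gives $\liminf_n s_nG_{s_n}(u_n)\ge\frac{N\omega_N}{2}\|u\|_{L^2(\Omega_\delta)}^2$. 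Sending $\delta\to 0^+$ via monotone convergence concludes.

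For the limsup (iii), the constant recovery sequence $u_n\equiv u$ works for $u\in\test(\Omega)$: $C^1$-regularity of $u$ yields $|u(y)-u(x)|^2\le\|u\|_{C^1}^2|y-x|^2$, so the singularity $|y-x|^{2-N-2s}$ is integrable uniformly in $s$ near $0$ and hence $s_nA_{s_n}(u)\to 0$; meanwhile, $2s_n(1-s_n)B_{s_n}(u)$ converges to the desired limit by dominated convergence, since $\supp u\Subset\Omega$ forces $sK_s$ to be bounded and convergent uniformly on $\supp u$. Density of $\test(\Omega)$ in $L^2(\Omega)$ and a diagonal selection of mollifications $u^{\eta_n}\to u$ with $\eta_n\to 0^+$ chosen sufficiently slowly then extend the construction to arbitrary $u\in L^2(\Omega)$. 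The main obstacle lies in (ii): the kernel $K_s(x)$ degenerates as $x\to\partial\Omega$, so the weights $\phi_{s_n}$ cannot be controlled uniformly on all of $\Omega$, and localizing to $\Omega_\delta$ before letting $\delta\to 0^+$ only at the very end is what decouples the boundary blow-up of $K_s$ from the weak-$L^2$ convergence of $u_n$.
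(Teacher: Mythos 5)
The paper does not prove this statement: it is recalled verbatim from \cite{crismale2023variational} as background, so there is no in-house argument to compare yours against. Your decomposition of $\R^N\times\R^N$ into the four quadrants, the reduction to the tail kernel $K_s(x)=\int_{\Omega^c}|y-x|^{-N-2s}\de y$, the sandwich $B(x,2R_0)^c\subset\Omega^c\subset B(x,d(x,\partial\Omega))^c$, and the diagonal extension of the recovery sequence from $\test(\Omega)$ to $L^2(\Omega)$ are all sound and constitute the standard Maz'ya--Shaposhnikova tail argument; parts (i) and the diagonalization in (iii) are correct as written. (Incidentally, for the liminf you do not actually need the localization to $\Omega_\delta$: only the \emph{upper} bound on $K_s$ degenerates at $\partial\Omega$, while the liminf uses only the uniform lower bound $sK_s(x)\ge\frac{N\omega_N}{2}(2R_0)^{-2s}$ together with weak lower semicontinuity of the norm.)

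The genuine problem is a factor of $2$ that your proof cannot paper over, because the entire content of the theorem is the value of the constant. Your own identity gives $sK_s(x)\to\frac{N\omega_N}{2}$ pointwise in $\Omega$, and the two mixed quadrants contribute $2B_s(u)$, so the weights you define satisfy $\phi_{s_n}=2s_n(1-s_n)K_{s_n}\to N\omega_N$, \emph{not} the constant $\frac{N\omega_N}{2}$ "displayed in the theorem statement" as you assert. Consequently the constant recovery sequence in (iii) yields $\lim_n s_nG_{s_n}(u)=N\omega_N\|u\|_{L^2(\Omega)}^2$, which is twice the claimed value, and your liminf argument likewise produces the bound $N\omega_N\|u\|^2$ (harmless for (ii), since a stronger lower bound still implies the stated one, but fatal for the equality in (iii) as stated). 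You must resolve this explicitly: either the statement is read with the normalization of the cited paper (where the Gagliardo-type energy carries a factor $\tfrac12$, which is incompatible with \eqref{eq:gagliardo} as defined here), in which case your argument is correct once the constants are tracked honestly and the discrepancy is attributed to the transcription of the recalled theorem; or, taking \eqref{eq:gagliardo} at face value, your proof establishes the result with limit functional $N\omega_N\|u\|_{L^2(\Omega)}^2$ and the limsup inequality for the stated functional $\frac{N\omega_N}{2}\|u\|^2$ is simply false, since the $\Gamma$-liminf already exceeds it. As written, the silent identification of $N\omega_N$ with $\frac{N\omega_N}{2}$ is an error.
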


Similar pointwise convergence results have been obtained in the setting of fractional perimeters, i.e. for energies of linear growth restricted to characteristic functions, (see \cite{dipierro2013asymptotics}) or in the setting of Gaussian fractional perimeter, in which the convolution kernel is given by a Gaussian distribution, (see \cite{carbotti2022asymptotics}).

\begin{theorem}[Theorem 2.1 in \cite{crismale2023variational}]
Let $\{s_n\}_{n \in \N}$ be a sequence such that $s_n \to 1^-$ as $ n \to + \infty$.
Then, the following hold.
\begin{enumerate}
\item (compactness)
	Let $\{u_n\}_{n \in \N}\subset L^2(\Omega)$ satisfy
	\[
	\|u_n \|_{L^2(\Omega)} + G_{s_n}(u_n) \leq M
	\]
	for some $M\geq 0$ independent of $n$.
	Then, there exists $u \in H_{0}^1(\Omega)$ such that, up to subsequences, $u_n \to u$ strongly in $L^2(\Omega)$.
	\item ($\Gamma$-convergence)
	The sequence $\{G_{s_n}\}_{n \in \N}$ $\Gamma$-converges with respect to the strong $L^2(\Omega)$ topology to the functional $G_1$ defined as
	\begin{equation}\label{eq:G1}
		G_1(u)\coloneqq
		\begin{cases}
			\displaystyle{
			\frac{\omega_N}{2} \int_{\Omega} \vert \nabla u(x) \vert^2 \de x} & \text{if } u \in H_0^1(\Omega) \, , \\
			+\infty & \text{otherwise in } L^2(\Omega).
		\end{cases}
	\end{equation}
\end{enumerate}
\end{theorem}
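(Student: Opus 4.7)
The plan is to route all three assertions through the Fourier-side representation
\[
G_s(u) \;=\; B_N(s) \int_{\RN} |\xi|^{2s} |\hat u(\xi)|^2 \de\xi, \qquad B_N(s) \xrightarrow[s\to 1^-]{} \tfrac{\omega_N}{2},
\]
which follows from the classical identity $[u]_{H^s(\RN)}^2 = 2\,C(N,s)^{-1}\|(-\Delta)^{s/2}u\|_{L^2(\RN)}^2$ together with the known asymptotics $C(N,s)\sim 4(1-s)/\omega_N$ as $s\to 1^-$ for the normalization constant of $(-\Delta)^s$. With this in hand, the theorem reduces to a routine combination of Rellich's compact embedding, Fatou's lemma, and Lebesgue's dominated convergence theorem.

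For the compactness assertion~(1), from $\|u_n\|_{L^2}+G_{s_n}(u_n)\leq M$ and the lower bound on $B_N(s_n)$ (once $s_n$ is close to~$1$) I would deduce $\int_{\RN}|\xi|^{2s_n}|\hat u_n(\xi)|^2\de\xi \leq C$. Picking any $s^*\in(0,1)$ with $s^*<s_n$ eventually and using the elementary inequality $|\xi|^{2s^*}\leq 1+|\xi|^{2s_n}$, this upgrades to a uniform bound on $\|u_n\|_{H^{s^*}(\RN)}$. Since each $u_n$ is supported in the fixed bounded Lipschitz set $\Omega$, Rellich's compact embedding $H^{s^*}(\Omega)\hookrightarrow L^2(\Omega)$ yields a subsequence with $u_n\to u$ strongly in $L^2(\Omega)$; passing to a further subsequence so that $\hat u_n\to \hat u$ a.e., Fatou's lemma on the Fourier bound gives $u\in H^1(\RN)$, and the support condition $\supp u\subset\overline{\Omega}$ together with the Lipschitz regularity of $\Omega$ upgrades this to $u\in H^1_0(\Omega)$.

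For the $\Gamma$-liminf in~(2), let $u_n\to u$ in $L^2(\Omega)$; by Plancherel $\hat u_n\to \hat u$ in $L^2(\RN)$, hence along a further subsequence pointwise a.e. Since $B_N(s_n)|\xi|^{2s_n}|\hat u_n(\xi)|^2 \to (\omega_N/2)|\xi|^2|\hat u(\xi)|^2$ pointwise a.e., Fatou's lemma yields
\[
\liminf_{n\to\infty}G_{s_n}(u_n) \;\geq\; \frac{\omega_N}{2}\int_{\RN}|\xi|^2|\hat u(\xi)|^2\de\xi \;=\; G_1(u),
\]
with the convention that both sides are $+\infty$ when $u\notin H^1_0(\Omega)$. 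For the $\Gamma$-limsup, the constant sequence $u_n\equiv u$ is a recovery sequence for any $u\in H^1_0(\Omega)$: the pointwise Fourier-side convergence is dominated by the integrable function $\bigl(\sup_{s\geq s_0}B_N(s)\bigr)(1+|\xi|^2)|\hat u(\xi)|^2$, so dominated convergence yields $G_{s_n}(u)\to G_1(u)$.

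The only genuinely delicate point is the identification of $B_N(s)$ and the verification of its limit as $s\to 1^-$, which is standard once one recalls the singular-integral representation of $(-\Delta)^s$. A purely real-variable alternative, following Ponce's truncation-and-averaging argument on the kernel $|h|^{-N-2s}$, is feasible but appreciably more technical around $|h|\to 0$; the Fourier route sidesteps these difficulties altogether and is strictly cleaner.
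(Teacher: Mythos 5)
Your proof is correct, but it takes a genuinely different route from the one behind the cited result. The original argument (Bourgain--Brezis--Mironescu and Ponce, on which Theorem 2.1 of the cited reference rests) is entirely real-variable: compactness is obtained by showing that a uniform bound on $(1-s_n)[u_n]_{H^{s_n}}^2$ controls $\|u_n - u_n\ast\rho_\eps\|_{L^2}$ uniformly, so that the Fr\'echet--Kolmogorov criterion applies, and the $\Gamma$-limit is identified through slicing, mollification and truncation of the kernel $|h|^{-N-2s}$ near the origin. You instead pass to the Fourier side via $G_s(u)=\tfrac{2(1-s)}{C(N,s)}\int|\xi|^{2s}|\hat u(\xi)|^2\de\xi$ and reduce everything to Fatou, dominated convergence and the fractional Rellich theorem; this works here precisely because $G_s$ in this paper is the seminorm of the zero-extension over all of $\RN\times\RN$ (not the relative seminorm over $\Omega\times\Omega$) and because $p=2$. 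What your route buys is brevity and transparency: the elementary inequality $|\xi|^{2s^*}\le 1+|\xi|^{2s_n}$ replaces the mollification estimates, the liminf and limsup inequalities become one-line applications of Fatou and dominated convergence, and the identification of the constant $\omega_N/2$ is delegated to the known asymptotics of $C(N,s)$. What it gives up is generality: the real-variable proof covers all $p\in(1,\infty)$, general convolution kernels, and domain-relative seminorms, none of which are accessible by Plancherel. Two points you rightly flag but should not treat as automatic are the identification $\{u\in H^1(\RN):\supp u\subset\overline\Omega\}=H^1_0(\Omega)$, which genuinely uses the Lipschitz regularity of $\partial\Omega$, and the fact that in the liminf inequality one must first pass to a subsequence realizing the liminf before extracting the a.e.-convergent sub-subsequence; both are routine and your argument stands.
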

	
For similar compactness and $\Gamma$-convergence results see also \cite{Pon04prim}.
For a pointwise converge result in a general setting we refer \cite{ludwig2014anisotropic}.
An analogous $\Gamma$-convergence result in the setting of the fractional perimeter has been studied in \cite{ambrosio2011gamma}.

For $u \in L^2(\Omega)$, we now introduce the rate functionals $\G^0_s: L^2(\Omega)\to[0,+\infty)$ defined as
\begin{equation*}
	\G^0_s(u) \coloneqq \frac{sG_s(u)-\frac{N\omega_N}{2}\|u\|^2_{L^2(\RN)}}{s}
\end{equation*}
and $\G^1_s: L^2(\Omega)\to[0,+\infty]$ defined as
\begin{equation}\label{G_s^1}
	\G^1_s(u) \coloneqq
	\frac{G_1(u)-G_s(u)}{1-s}.
\end{equation}
	
The $\Gamma$-convergence of the former has been established already.
	
\begin{theorem}[Theorem 1.4 in \cite{crismale2023variational}]
Let $\{s_n\}_{n \in \N}$ be a sequence such that $s_n \to 0^+$ as $ n \to + \infty$.
Then, the following hold.
\begin{enumerate}
	\item (compactness)
	Let $\{u_n\}_{n \in \N}\subset L^2(\Omega$) satisfy
		\[
		\|u_n\|_{L^2(\Omega)}+\G^0_{s_n}(u_n) \leq M
		\]
	for some $M\geq 0$ independent of $n$.
	Then, there exists $u \in \mathcal{H}^0(\Omega)$ such that, up to subsequences, $u_n \to u$ in $L^2(\Omega)$, where the set $\mathcal{H}^0(\Omega)\subset L^2(\RN)$ is defined as
		$$
		\mathcal{H}^0(\Omega) \coloneqq \Big\{v\in L^2(\Omega): \int_{\RN}\int_{\RN}\frac{| v(y)- v(x)|^2}{|y-x|^N}\de y\de x<+\infty \Big\}.
		$$
	\item ($\Gamma$-convergence)
	The sequence $\{\G^0_{s_n}\}_{n \in \N}$ $\Gamma$-converge to the functional $\G^0$ defined as
		$$
		\G^0 (u) \coloneqq \begin{cases}
			\displaystyle
			\frac{1}{2}\iint\limits_{|y-x|\le1}\frac{|v(y)-v(x)|^2}{|y-x|^N}\de y\de x-\iint\limits_{|y-x|>1}\frac{u(x)u(y)}{|y-x|^N}\de y\de x & u\in \mathcal{H}^0(\Omega) \\
			+\infty & \text{otherwise in } L^2(\Omega)
		\end{cases}
		$$
	with respect to the $L^2(\Omega)$ topology.
\end{enumerate}
\end{theorem}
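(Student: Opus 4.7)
The plan is a standard two-step $\Gamma$-convergence argument: decompose $\G^0_s(u)$ algebraically into a singular ``near'' piece and a bounded ``far'' bilinear piece, then treat compactness, the $\Gamma$-liminf, and the $\Gamma$-limsup separately on each.

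The decomposition comes from splitting the outer integral at $|y-x|=1$ and expanding $|u(y)-u(x)|^2 = u(x)^2+u(y)^2-2u(x)u(y)$ on the far region. Using the explicit value $\int_{|z|>1}|z|^{-N-2s}\,\de z = \frac{N\omega_N}{2s}$, the $1/s$ divergence produced by the far integral cancels the $1/s$ in the normalization of $\G^0_s$, giving (modulo $O(1)\|u\|^2_{L^2}$ contributions)
\begin{equation*}
\G^0_s(u) = (1-s)\iint_{|y-x|\le 1}\frac{|u(y)-u(x)|^2}{|y-x|^{N+2s}}\de y\de x - 2(1-s)\iint_{|y-x|>1}\frac{u(x)u(y)}{|y-x|^{N+2s}}\de y\de x + O(1)\|u\|^2_{L^2(\RN)}.
\end{equation*}

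For compactness, the far bilinear piece is bounded by $C\|u_n\|^2_{L^2}$ since the kernel satisfies $|y-x|^{-N-2s}\le 1$ on $\{|y-x|>1\}$ and $u_n$ is supported in the bounded set $\Omega$, so the hypothesis bound forces uniform boundedness of the near piece. Since $|y-x|^{-N-2s_n}\ge |y-x|^{-N}$ on $\{|y-x|\le 1\}$, this yields uniform boundedness of $\{u_n\}$ in $\mathcal{H}^0(\Omega)$. A Fourier-side argument (regularity encoded by the $\log(1+|\xi|^2)$-weight, cf.\ the characterization of the domain of $\G^1$ in the introduction) combined with the common compact support delivers the compact embedding $\mathcal{H}^0(\Omega)\hookrightarrow L^2(\Omega)$, hence strong $L^2$-convergence along a subsequence; Fatou's lemma then places the limit in $\mathcal{H}^0(\Omega)$.

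For the $\Gamma$-liminf, I would apply Fatou to the nonnegative near piece (using a.e.\ convergence $u_n\to u$ along a subsequence and $|y-x|^{-N-2s_n} \to |y-x|^{-N}$) together with the continuity of the far bilinear form on $L^2(\Omega)$ (its kernel converges in $L^1$ on $\{|y-x|>1\}\cap(\Omega\times\Omega)$ by dominated convergence). For the $\Gamma$-limsup, a mollification recovery $u_n = \rho_{\eps_n}*u$ with $\eps_n\to 0$ combined with a diagonal argument provides the desired sequence: the smooth $u_n$ lies in every $H^s$ so $\G^0_{s_n}(u_n)$ is finite, and calibrating $\eps_n$ against $s_n$ makes the mollification error in the near piece vanish in the limit. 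The main obstacle is exactly this limsup construction, since a function $u\in\mathcal{H}^0(\Omega)$ need not belong to any $H^s(\Omega)$ with $s>0$: the constant sequence fails as $\G^0_s(u)$ may well be infinite, and the mollification scale $\eps_n$ must be tuned carefully to $s_n$. A secondary technical point is the compact embedding $\mathcal{H}^0(\Omega)\hookrightarrow L^2(\Omega)$, which is borderline (logarithmic-order regularity) and requires a Fourier-side argument with a logarithmic weight rather than a direct Rellich--Kondrachov.
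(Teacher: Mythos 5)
The first thing to say is that the paper contains no proof of this statement: it is recalled verbatim as Theorem~1.4 of \cite{crismale2023variational} and used as a black box, so there is no in-paper argument to compare yours against. Judged on its own terms, your outline follows the natural route (split at $|y-x|=1$, expand the square on the far region, use $\int_{|z|>1}|z|^{-N-2s}\de z=\tfrac{N\omega_N}{2s}$ to cancel the $1/s$-divergence, then treat the two pieces separately), and the compactness and liminf steps are essentially sound. Note, however, that your decomposition silently corrects the statement as transcribed: for $v$ supported in the bounded set $\Omega$ the quantity $\iint_{|y-x|>1}\frac{v(x)^2+v(y)^2}{|y-x|^N}\de y\de x$ always diverges (since $\int_{|h|>1}|h|^{-N}\de h=+\infty$), so $\mathcal{H}^0(\Omega)$ as written is $\{0\}$; the defining integral must be restricted to $|y-x|\le 1$, consistently with the formula for $\G^0$. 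Similarly, whether your residual really is $O(1)\|u\|^2_{L^2}$ depends on the precise normalization of the first-order constant ($\tfrac{N\omega_N}{2}$ versus $N\omega_N$); with the constants exactly as printed the $1/s$ terms do not cancel, so this bookkeeping must be checked against the source's definitions.

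The genuine gaps are precisely in the two steps you flag but do not carry out. First, the compact embedding of (the corrected) $\mathcal{H}^0(\Omega)$ into $L^2(\Omega)$ is asserted, not proved: the argument is that the Fourier multiplier $\mu(\xi)=\int_{|h|\le1}\frac{2-2\cos(\xi\cdot h)}{|h|^{N}}\de h$ grows like $\log(1+|\xi|^2)$ (by the same scaling computation as in Lemma~\ref{lem:dom}), so a bounded sequence in $\mathcal{H}^0(\Omega)$ has uniformly small high-frequency tails; combined with the common compact support this gives precompactness in $L^2$ by the Kolmogorov--Riesz (or Pego) criterion. Second, the recovery sequence is not constructed. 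Plain mollification $\rho_{\eps_n}*u$ is inadmissible because its support leaves $\Omega$, while the functionals act on functions extended by zero outside $\Omega$; one must first contract $u$ inward (this is where the Lipschitz regularity of $\partial\Omega$ is used) and only then mollify. Moreover, ``calibrating $\eps_n$ against $s_n$'' needs an actual estimate: the clean route is on the Fourier side, using the uniform bound $\mu_s(\xi)\le C|\xi|^2$ for $s\le\tfrac12$ together with the rapid decay of $\widehat{\rho_\eps}$ to pass to the limit in $n$ for each fixed $\eps$ by dominated convergence, then letting $\eps\to0$ by monotone convergence and extracting a diagonal sequence. Until these two points are written out, what you have is a plausible plan rather than a proof.
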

We refer to  \cite{de20190} for the analogous compactness and $\Gamma$-convergence results in the setting of the fractional perimeter.
	
\section{The main limit result}
\label{sec:main}
	
We now focus on the analysis of $\{\G_{s}^1\}_{s\in(0,1)}$ as $s\to1^-$.
We will prove that the energies $\G_s^1$ \emph{Mosco-converge} to a limit functional $\G^1$.
We remind that Mosco-convergence (see \cite{mosco1967approximation}), \emph{i.e.}\ when weak $\Gamma$-liminf and strong $\Gamma$-limsup coincide, implies $\Gamma$-convergence.

Before stating the main convergence result, we first define and discuss the limit functional $\G^1$, since its formulation is quite involved.
	
\subsection{The candidate Mosco-limit}
We introduce the candidate $\Gamma$-limit $\G^1: L^2(\Omega) \rightarrow (-\infty, \, + \infty]$ defined as
\begin{equation}\label{eq:G1-Glim-new}
\begin{split}
\G^1(u) &:= \int_{\R^N}\int_{\R^N}\frac{|\nabla u(x)\cdot h|^2\chi_{B(0,1)}(h)-|u(x+h)-u(x)|^2}{|h|^{N+2}}\de h\de x,
\end{split}
\end{equation}
if $ u \in \mathfrak{D}(\G^1) \subset L^2(\Omega)$ and $\G^1(u)=+\infty$ if $ u \in L^2(\Omega) \setminus \mathfrak{D}(\G^1)$, where $\mathfrak{D}(\G^1)$ is the set of functions $u \in H_0^1(\Omega)$ such that
\begin{equation}\label{eq:domaindef}
\frac{ \int_{0}^{1} \vert \nabla u(x+th) \cdot h \vert^2 \de t-\vert u(x+h)-u(x)\vert^2}{\vert h \vert^{N+2}} \chi_{B(0,1)}(h) \in L^1(\R^N \times \R^N).
\end{equation}

\begin{remark}[alternative form of $\G^1$]\label{rmk:alt}
The limit functional $\G^1$, as defined in \eqref{eq:G1-Glim-new}, can be written in an equivalent form as
\begin{equation}\label{eq:G1-Glim}
\begin{split}
\G^1(u) &= \int_{\R^N}\int_{B(0,1)}\frac{\int_0^1|\nabla u(x+th)\cdot h|^2\de t-|u(x+h)-u(x)|^2}{|h|^{N+2}}\de h\de x \\
&\qquad - \int_{\R^N}\int_{B(0,1)^c}\frac{|u(x+h)-u(x)|^2}{|h|^{N+2}}\de h\de x,
\end{split}
\end{equation}
by exploiting Fubini's Theorem and a change of variable in the gradient term.
Indeed, by \eqref{eq:domaindef} we can switch the orders of the integrals and using the change of variable $x'=x+th$ we get
$$
\int_{R^N}\int_{B(0,1)}\int_0^1\frac{|\nabla u(x+th)\cdot h|^2}{|h|^{N+2}}\de t\de h\de x = \int_{R^N}\int_{B(0,1)}\frac{|\nabla u(x')\cdot h|^2}{|h|^{N+2}}\de h\de x'.
$$
This, less compact, formulation has the advantage to separate the singular integral (first term) from the absolutely convergent one (second term).
With this alternative formulation \eqref{eq:G1-Glim} it is evident why \eqref{eq:domaindef} defines the domain of $\G^1$.

Another advantage of this is also the fact that the function in \eqref{eq:domaindef} is non-negative by a straightforward application of the Fundamental Theorem of Calculus and Jensen's inequality.
We will often prefer formulation \eqref{eq:G1-Glim} in the sequel, especially in computations.
\end{remark}

Even though the characterization of the domain $\mathfrak{D}(\G^1)$ is quite involved, we can prove that this set is dense in $L^2(\Omega)$ thanks to the following result.

\begin{lemma}\label{lemma03112023pom1}
Let $u \in C_c^{2}(\Omega)$ then
$$
\frac{ \int_{0}^{1} \vert \nabla u(x+th) \cdot h \vert^2 \de t-\vert u(x+h)-u(x)\vert^2}{\vert h \vert^{N+2}} \chi_{B(0,1)}(h) \in L^{1}(\R^N \times \R^N);
$$
i.e., $C_c^2(\Omega) \subset \mathfrak{D}(\G^1)$.
\end{lemma}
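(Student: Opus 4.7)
The plan is to exploit the variance-type structure of the integrand and a second-order Taylor-like bound.

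First, since $u \in C^2_c(\Omega)$, the Fundamental Theorem of Calculus applied to $\phi(t):=u(x+th)$ yields $u(x+h)-u(x) = \int_0^1 \nabla u(x+th)\cdot h\de t$. Squaring and applying Fubini twice together with the elementary identity
\[
\int_0^1 f(t)^2\de t - \Bigl(\int_0^1 f(t)\de t\Bigr)^2 = \frac{1}{2}\int_0^1\int_0^1 |f(t)-f(s)|^2\de t\de s,
\]
with $f(t) = \nabla u(x+th)\cdot h$, I would rewrite the numerator of the integrand as
\[
\int_0^1 |\nabla u(x+th)\cdot h|^2\de t - |u(x+h)-u(x)|^2 = \frac{1}{2}\int_0^1\int_0^1 \bigl|(\nabla u(x+th)-\nabla u(x+sh))\cdot h\bigr|^2\de t\de s.
\]
This makes non-negativity transparent and isolates a second-order difference of $\nabla u$.

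Next, I would estimate the right-hand side. For $u\in C^2_c(\Omega)$, by the mean value theorem applied to $\nabla u$ along the segment from $x+sh$ to $x+th$,
\[
|(\nabla u(x+th)-\nabla u(x+sh))\cdot h| \le |t-s|\,|h|^2\,\|\nabla^2 u\|_{L^\infty(\R^N)}.
\]
Inserting this bound and performing the elementary integration in $s,t$, one obtains the pointwise bound
\[
\frac{\int_0^1 |\nabla u(x+th)\cdot h|^2\de t - |u(x+h)-u(x)|^2}{|h|^{N+2}}\chi_{B(0,1)}(h) \le C\,\|\nabla^2 u\|_{L^\infty}^2\,|h|^{2-N}\chi_{B(0,1)}(h),
\]
which is integrable in $h$ on $B(0,1)$ since $2-N>-N$.

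Finally, for $|h|<1$, the integrand is supported in $x$ belonging to the bounded set $\Omega+B(0,1)$, because the three terms $\nabla u(x+th)$, $u(x+h)$, and $u(x)$ all vanish outside this set, due to $\supp u\subset\Omega$ with $\Omega$ bounded. Combining the pointwise bound with the bounded $x$-support gives integrability on $\R^N\times\R^N$, proving $u\in\D(\G^1)$.

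I do not expect any serious obstacle: the only subtlety is writing the variance-type identity cleanly so that the expected second-order gain ($|h|^4$ instead of $|h|^2$) is visible, and this is immediate once one views the numerator as $\tfrac12$ times the squared $L^2(dt\,ds)$-oscillation of $\nabla u(x+\cdot h)\cdot h$.
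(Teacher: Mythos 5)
Your proof is correct, but it takes a genuinely different route from the paper's. The paper writes the numerator as a difference of squares under the $t$-integral, namely $\int_0^1\big(u(x+h)-u(x)-\nabla u(x+th)\cdot h\big)\big(u(x+h)-u(x)+\nabla u(x+th)\cdot h\big)\de t$, then controls the first factor by the Taylor bound $|u(x+h)-u(x)-\nabla u(x+th)\cdot h|\le R|h|^2$ and the second by the triangle inequality, ending with two terms each bounded by $RN\omega_N\|\nabla u\|_{L^1(\R^N)}$; the effective pointwise gain there is $O(|h|^{3})$ in the numerator. You instead use the Fundamental Theorem of Calculus plus the variance identity $\int_0^1 f^2\de t-(\int_0^1 f\de t)^2=\tfrac12\iint|f(t)-f(s)|^2\de t\de s$ with $f(t)=\nabla u(x+th)\cdot h$, and then the mean value theorem on $\nabla u$, which yields the sharper pointwise bound $C\|\nabla^2 u\|_\infty^2|h|^4$ for the numerator and hence $C|h|^{2-N}$ for the full integrand. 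Your identity also makes the non-negativity of the integrand (which the paper derives separately via Jensen's inequality in Remark \ref{rmk:alt}) completely transparent, and it quantifies the ``second-order gain'' more precisely. One small bookkeeping point you handle correctly and should keep: the pointwise bound alone is not enough, and the restriction of $x$ to the bounded set $\Omega+B(0,1)$ (coming from $\supp u\subset\Omega$ and $|h|<1$) is what closes the $L^1(\R^N\times\R^N)$ claim; the paper achieves the analogous step through the finiteness of $\int_{\R^N}|\nabla u|\de x$.
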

\begin{proof}
We preliminarily observe that for every $h\in B(0,1)$
\begin{multline*}
	\frac{\int_{0}^{1} \vert \nabla u(x+th) \cdot h \vert^2 \de t-\vert u(x+h)-u(x)\vert^2}{\vert h \vert^{N+2}} \\
	=\int_{0}^1 \frac{\big|\big(u(x+h)-u(x)- \nabla u(x+th)\cdot h\big)\big(u(x+h)-u(x)+\nabla u(x+th)\cdot h\big)\big|}{\vert h \vert^{N+2}} \de t,
\end{multline*}
where we also used that the left-hand side is nonnegative.
By the regularity of $u$, we have that for all $t\in [0,1]$
\begin{equation}\label{eq:taylor}
\vert u(x+h)-u(x)-\nabla u(th+x)\cdot h \vert \leq R \vert h \vert^2
\end{equation}
where 
$$
R:= C(N)\sup \Big\{ \Big\vert \frac{\partial^2 u}{\partial x_i \, \partial x_j}(x) \Big\vert \, : x \in \Omega, \, i,j = 1, \cdots,N \Big\}.
$$ 
An application of the triangle inequality yields
\begin{equation}\label{07122023pom1}
	\begin{split}
	\int_{\R^N} &\int_{B(0,1)}\int_{0}^1 \frac{\vert \big(u(x+h)-u(x)- \nabla u(th+x)\cdot h\big)\big(u(x+h)-u(x)+ \nabla u(x+th)\cdot h\big) \vert}{\vert h \vert^{N+2}} \, \de h \de t \de x  \\
	&\le \int_{\R^N} \int_{B(0,1)} \int_{0}^1\frac{\vert u(x+h)-u(x)-\nabla u(x+th)\cdot h \vert \vert u(h+x)-u(x) \vert }{\vert h \vert^{N+2}} \, \de h \de t  \de x \\
	& \qquad + \int_{\R^N} \int_{B(0,1)} \int_{0}^1\frac{\vert u(h+x)-u(x)-\nabla u(x+th)\cdot h \vert \vert \nabla u(x+th) \cdot h \vert }{\vert h \vert^{N+2}} \, \de h \de t \de x.
	\end{split}
\end{equation}
It is now sufficient to control the two terms at the right-hand side of \eqref{07122023pom1}.
Regarding the first, from \eqref{eq:taylor} we infer
\begin{equation*}
	\begin{split}
	\int_{\R^N} &\int_{B(0,1)}\int_{0}^1\frac{\vert u(h+x)-u(x)-\nabla u(x+th)\cdot h \vert \vert u(h+x)-u(x) \vert }{\vert h \vert^{N+2}}  \de h \de t  \de x\\
	&\leq  R \int_{\R^N} \, dx\,\int_{B(0,1)}\frac{\vert  u (h+x)-u(x) \vert }{\vert h \vert^{N}} \, dh \\
	& \leq R \int_{\R^N} \int_{B(0,1)} \int_{0}^{1} \frac{\vert \nabla u(x+sh)\cdot h \vert }{\vert h \vert^{N}} \de x \de h \de s\\
	&= R \int_{\R^N} \vert \nabla u (\xi) \vert \de \xi \int_{B(0,1)} \frac{1}{\vert h \vert^{N-1}} \de h= R N \omega_N \int_{\R^N} \vert \nabla u (x) \vert \de x\, .
	\end{split}
\end{equation*}
Arguing similarly as above we also get
\begin{equation*}
	\begin{split}
	\int_{\R^N} &\int_{B(0,1)} \int_{0}^1\frac{\vert u(h+x)-u(x)-\nabla u(th+x)\cdot h \vert \vert \nabla u(th+x) h \vert }{\vert h \vert^{N+2}} \de h \de t  \de x\\
	&\leq R \int_{\R^N} \int_{B(0,1)}\frac{\vert \nabla u (\xi) \vert }{\vert h \vert^{N-1}} dh \de \xi = R N \omega_N \int_{\R^N} \vert \nabla u (\xi) \vert \de \xi \,.
	\end{split}
\end{equation*}
Thus, by the two formulas above and \eqref{07122023pom1} we obtain the claim.		
\end{proof}

\subsection{Characterization of the domain via Fourier transform}\label{sec:domain-F}

As we already commented on, the definition of $\D(\G^1)$ is not very transparent, even though, as Lemma \ref{lemma03112023pom1} shows, it is well-defined in $L^2(\Omega)$, in particular it is dense in the strong $L^2$ topology.
Further interesting comments on the set $\D(\G^1)$ are apparent using the Fourier transform.

Nonlocal, $L^2$-based quadratic functionals can be rewritten in a local form thanks to the Fourier transform see \cite{davoli2023sharp ,di2012hitchhikers}. 
To fix notation, for every $u\in C^\infty_c(\R^N)$ we write
	$$
	\mathcal{F}u(\xi):=(2\pi)^{-\frac{N}{2}}\int_{\R^N} e^{-i x\cdot\xi} u(x) dx,
	$$
and this extends (as a linear operator) to the whole $L^2(\R^N)$.
When there is no risk of confusion we will write $\hat u(\xi)=\mathcal{F}u(\xi)$.

Noticing that $\mathcal{F}(u(\cdot+h)-u)(\xi)=(e^{-i h\cdot\xi}-1)\hat u(\xi)$ and that $\mathcal{F}(\nabla u)(\xi)=i\xi\hat u(\xi)$, the Fourier transform (in $x$ for fixed $h$ and $t$) of the term in \eqref{eq:domaindef} reads $|h|^{-N-2}\big(|\xi\cdot h|^2-2+2\cos(\xi\cdot h)\big)|\hat u(\xi)|^2$.
Denoting as
\begin{equation}\label{eq:mult}
m(\xi):=\int_{B(0,1)}\frac{|\xi\cdot h|^2-2+2\cos(\xi\cdot h)}{|h|^{N+2}}\de h,
\end{equation}
by Plancherel's and Fubini's Theorem we get that for every $u\in H^1_0(\Omega)$, $u\in\D(\G^1)$ if and only if
	\begin{equation}\label{eq:dom-four}
	\int_{R^N} m(\xi)|\hat u(\xi)|^2 \de \xi<+\infty.
	\end{equation}
By the change of variable $\eta=|\xi|h$, we can rewrite $m$ as
	\begin{equation}\label{eq:multiplier}
	m(\xi)=|\xi|^2\int_{B(0,|\xi|)}\frac{|\bar{\xi}\cdot \eta|^2-2+2\cos(\bar \xi\cdot\eta)}{|\eta|^{N+2}}\de \eta
	\end{equation}
where $ \bar{\xi}= \frac{ \xi}{\vert \xi \vert}$.
From this we can see that \eqref{eq:dom-four} is a differentiability condition of order strictly higher than one.
By a careful analysis of the decay of $m(\xi)$, we can give a complete characterization of the domain $\D(\G^1)$.

\begin{lemma}\label{lem:dom}
Let $u\in H^1_0(\Omega)$.
Then \eqref{eq:domaindef} holds if and only if
\begin{equation}\label{eq:decay}
\int_{\R^N}|\xi|^2\log(|\xi|^2+1)|\hat u(\xi)|^2 \de\xi<\infty.
\end{equation}
In particular $\D(\G^1)=\{u\in H^1_0(\Omega) : \eqref{eq:decay} \text{ holds true }\}$ and $ (\D(\G^1), \|\cdot \|_{\D(\G^1)})$ with $  \|u \|_{\D(\G^1)} := \| \nabla u \|_{L^2(\Omega)}+ (\int_{\R^N}|\xi|^2\log(|\xi|^2+1)|\hat u(\xi)|^2 \de\xi )^{\frac{1}{2}}$ is a Banach space.
\end{lemma}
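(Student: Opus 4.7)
The plan is to reduce the problem, via the Fourier characterization \eqref{eq:dom-four} already derived from Plancherel, to the pointwise equivalence
\[
m(\xi)\,\asymp\,|\xi|^{2}\log(|\xi|^{2}+1),\qquad \xi\in\RN,
\]
with implicit constants depending only on $N$. Granted such an estimate, \eqref{eq:domaindef} is equivalent to $\int_{\RN}m(\xi)|\hat u(\xi)|^{2}\de\xi<\infty$, and this in turn is equivalent to \eqref{eq:decay}. Combined with the standing assumption $u\in H^{1}_{0}(\Omega)$, which alone makes $|\xi|^{2}|\hat u|^{2}$ integrable, the identification of $\D(\G^{1})$ follows.

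To prove the multiplier equivalence I would distinguish small and large $|\xi|$. For $|\xi|\le1$, working with \eqref{eq:mult} and the Taylor expansion $t^{2}-(2-2\cos t) = t^{4}/12 + O(t^{6})$ at $t=\xi\cdot h$ shows that the integrand is non-negative and comparable to $(\xi\cdot h)^{4}/|h|^{N+2}$ uniformly on $B(0,1)$, giving $m(\xi)\asymp|\xi|^{4}\asymp|\xi|^{2}\log(|\xi|^{2}+1)$ near the origin. For $|\xi|\ge1$ I would use the rescaled formula \eqref{eq:multiplier} and split the integration domain into $B(0,1)$ and $B(0,|\xi|)\setminus B(0,1)$: on the inner ball the same Taylor expansion produces a contribution bounded uniformly in $\bar\xi$, while on the annulus the cosine term is bounded by $2$, so up to an additive $O(1)$ the integrand reduces to $|\bar\xi\cdot\eta|^{2}/|\eta|^{N+2}$. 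Passing to spherical coordinates $\eta=\rho\sigma$,
\[
\int_{B(0,|\xi|)\setminus B(0,1)}\frac{|\bar\xi\cdot\eta|^{2}}{|\eta|^{N+2}}\de\eta\,=\,\log|\xi|\,\int_{\Snmu}|\bar\xi\cdot\sigma|^{2}\de\mathcal{H}^{N-1}(\sigma)\,=\,\omega_{N}\log|\xi|,
\]
where the spherical integral is direction-independent by rotation invariance of $\mathcal{H}^{N-1}$ on $\Snmu$ and evaluates to $\omega_{N}$ via $\sum_{i}\int_{\Snmu}\sigma_{i}^{2}\de\mathcal{H}^{N-1}=N\omega_{N}$. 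Multiplying by $|\xi|^{2}$ and matching against $|\xi|^{2}\log(|\xi|^{2}+1)\sim 2|\xi|^{2}\log|\xi|$ closes the two-sided bound in this regime.

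For the Banach-space assertion, set $X:=L^{2}(\RN;|\xi|^{2}\log(|\xi|^{2}+1)\de\xi)$, which is complete as a weighted $L^{2}$ space. Given a Cauchy sequence $\{u_{n}\}$ in $\D(\G^{1})$, the gradient term in the norm together with Poincar\'e's inequality forces $u_{n}\to u$ in $H^{1}_{0}(\Omega)$ for some $u$, while completeness of $X$ ensures $\hat u_{n}\to v$ in $X$. Since $u_{n}\to u$ also in $L^{2}(\RN)$, Plancherel gives $\hat u_{n}\to\hat u$ in $L^{2}(\RN)$, so up to subsequences both $\hat u_{n}\to\hat u$ and $\hat u_{n}\to v$ pointwise a.e., forcing $v=\hat u$; hence $u\in\D(\G^{1})$ and $u_{n}\to u$ in $\|\cdot\|_{\D(\G^{1})}$.

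The principal obstacle is the large-$|\xi|$ analysis: the integrand $|\bar\xi\cdot\eta|^{2}-2+2\cos(\bar\xi\cdot\eta)$ is balanced by cancellation at small $|\eta|$ yet is sign-changing, so one must decompose the integration region cleanly, keep all error terms uniform in the direction $\bar\xi$, and verify that the logarithmic divergence generated on the annular far-field is not spuriously cancelled by the near-origin correction.
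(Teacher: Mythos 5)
Your proposal is correct and follows the paper's overall scheme --- reduce via \eqref{eq:dom-four} to the two-sided bound $m(\xi)\asymp|\xi|^2\log(|\xi|^2+1)$ and treat small and large frequencies separately --- but your large-frequency argument is genuinely different from, and in one respect sharper than, the paper's. The paper never computes the annulus integral exactly: for the upper bound it simply drops the cosine term, and for the lower bound it uses the elementary inequalities $t^2-2+2\cos t\ge\tfrac{5}{9}t^2$ for $|t|\ge3$ and $t^2-2+2\cos t\ge\tfrac{1}{48}t^4$ for $|t|<3$, restricted to the cone $E_\xi=\{\eta:\sqrt2\,\bar\xi\cdot\eta\ge|\eta|\}$, to retain a logarithmically divergent contribution. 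You instead evaluate $\int_{B(0,|\xi|)\setminus B(0,1)}|\bar\xi\cdot\eta|^2|\eta|^{-N-2}\de\eta=\omega_N\log|\xi|$ exactly via the rotation-invariance identity and absorb the cosine into an additive $O(1)$; this yields the sharp asymptotics $m(\xi)=\omega_N|\xi|^2\log|\xi|+O(|\xi|^2)$, which the paper's cone argument does not produce. Two points to tighten. First, your closing worry that the integrand is ``sign-changing'' is unfounded: $t^2-2+2\cos t\ge0$ for every $t\in\R$ since $\cos t\ge1-t^2/2$ (this is the nonnegativity already recorded in Remark \ref{rmk:alt}), and this nonnegativity is exactly what closes the lower bound in the transition region $1\le|\xi|\le R$, where $\omega_N\log|\xi|+O(1)$ has no definite sign: there one uses $m(\xi)\ge|\xi|^2\int_{B(0,1)}\big(|\bar\xi\cdot\eta|^2-2+2\cos(\bar\xi\cdot\eta)\big)|\eta|^{-N-2}\de\eta\ge c_N|\xi|^2$ from the quartic lower bound, together with the boundedness of $\log(|\xi|^2+1)$ on that range; make this step explicit. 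Second, your completeness argument for $(\D(\G^1),\|\cdot\|_{\D(\G^1)})$ is correct and in fact supplies more than the paper does, since the paper's proof stops once the multiplier bounds are established.
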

\begin{proof}
The result follows by providing upper and lower bounds for the multiplier defined in \eqref{eq:multiplier}.

We start by giving an upper bound.
Consider first the case $|\xi|\le2$.
Then, since $\cos(t)\le 1-\frac{t^2}{2}+\frac{t^4}{24}$ for every $t\in\R$, we get
\begin{align*}
m(\xi) \le |\xi|^2 \frac{1}{12}\int_{B(0,|\xi|)}\frac{|\bar{\xi}\cdot\eta|^4}{|\eta|^{N+2}}\de\eta \le \frac{N\omega_N}{24}|\xi|^4.
\end{align*}
For $|\xi|>2$ we thus have
\begin{align*}
m(\xi) &\le \frac{1}{6}N\omega_N|\xi|^2 + |\xi|^2\int_{B(0,|\xi|)\setminus B(0,2)}\frac{|\bar{\xi}\cdot\eta|^2}{|\eta|^{N+2}}\de\eta \\
&\le \frac{1}{6}N\omega_N|\xi|^2 + N\omega_N |\xi|^2\int_1^{|\xi|}\frac{1}{t}\de t = N\omega_N|\xi|^2 \Big( \frac{1}{6}+\log(|\xi|)\Big).
\end{align*}
Hence for some constant $C_2$ depending on $N$ it holds
$$
m(\xi) \le C_2|\xi|^2\big(1+\log(|\xi|^2+1)\big).
$$

We now pass to the lower bound.
We preliminarily notice that, straightforward computations yield that $t^2-2+2\cos(t)\ge\frac{5}{9}t^2$, for every $t\in\R$ with $|t|\ge3$, while for $|t|< 3$ we have $t^2-2+2\cos(t)\ge\frac{1}{48}t^4$.
We also denote $E_\xi:=\{\eta\in\R^N\setminus\{0\}:\sqrt{2}\bar{\xi}\cdot\eta\ge|\eta|\}$.
Hence, for $|\xi|<3$ we have
\begin{align*}
m(\xi) &\ge |\xi|^2 \frac{1}{48} \int_{B(0,|\xi|)}\frac{|\bar{\xi}\cdot\eta|^4}{|\eta|^{N+2}}\de\eta \\
&\ge |\xi|^2 \frac{1}{192}\int_{B(0,|\xi|)\cap E_\xi}\frac{1}{|\eta|^{N-2}}\de\eta \ge \frac{N\omega_N}{768}|\xi|^4.
\end{align*}
If instead $|\xi|\ge3$ we get
\begin{align*}
m(\xi) &\ge \frac{1}{768}N\omega_N|\xi|^4+|\xi|^2 \frac{5}{9} \int_{B(0,|\xi|)\setminus B(0,3)}\frac{|\bar{\xi}\cdot\eta|^2}{|\eta|^{N+2}}\de\eta \\
&\ge \frac{1}{768}N\omega_N|\xi|^4+|\xi|^2 \frac{5}{18}\int_{\big(B(0,|\xi|)\setminus B(0,3)\big)\cap E_\xi}\frac{1}{|\eta|^{N}}\de\eta \\
&\ge \frac{1}{768}N\omega_N|\xi|^4+\frac{5}{36}N\omega_N|\xi|^2(\log(|\xi|)-\log(3)).
\end{align*}
This implies that there exists a constant $C_1$ depending on $N$ such that
$$
m(\xi) \ge C_1 |\xi|^2 \big(\log(|\xi|^2+1)-1\big)
$$
and the result is proven.
\end{proof}

\subsection{Second-order $\Gamma$-convergence of Gagliardo seminorms}
	
We are now in position to state our result, that is the second-order asymptotics, via $\Gamma$-convergence, of the $s$-Gagliardo seminorms.
As said before, we will prove a stronger result, namely the Mosco-convergence of the functionals $\G^1_s$ as $ s\to1^-$.

\begin{theorem}\label{thm:main}
Let $\{s_n\}_{n \in \mathbb{N}} \subset (0,1)$ be such that $s_n \to1^-$ as $ n \to + \infty$,
and let $\G^1_s$ and $\G^1$ be defined as in \eqref{G_s^1} and \eqref{eq:G1-Glim-new} respectively.
Then, the following Mosco-convergence results hold true.
\begin{enumerate}
	\item (compactness) Let $\{u_n\}_{n\in\N}\subset L^2(\Omega)$ be such that
	\begin{equation*}\label{eq:comp-2-ord}
	\sup_{n \in \mathbb{N}} \G^1_{s_n}(u_n) <+\infty
	\quad \text{and} \quad
	\sup_{n \in \mathbb{N}} \|u_n\|_{L^2(\Omega)}<+\infty.
	\end{equation*} 
	Then, up to subsequences, $u_n \rightarrow u$ in the weak topology of $H^1_0(\Omega)$ for some $u\in \mathfrak{D}(\G^1)$.
	\item (weak liminf inequality) For every $u \in L^2(\Omega)$ and for every $ \{u_n\}_{n \in \N} \subset L^2(\Omega)$ with $ u_n \weak u$ in the weak topology of $L^2(\Omega)$, it holds
	\begin{equation}\label{liminfineq}
	\G^1(u)\leq \liminf_{n \rightarrow + \infty} \G^1_{s_n}(u_n).
	\end{equation}
	\item (strong limsup inequality) For every $u \in L^2(\Omega)$ there exists $ \{u_n\}_{n \in \N} \subset L^2(\Omega)$ with $ u_n \rightarrow u$ in $L^2(\Omega)$ such that
	\begin{equation}\label{limsupineq}
	\G^1(u)\geq  \limsup_{n \rightarrow + \infty} \G^1_{s_n}(u_n).
	\end{equation}
\end{enumerate}
\end{theorem}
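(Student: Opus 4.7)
The cornerstone of the strategy is an integral representation of $\G^1_s$ paralleling \eqref{eq:G1-Glim}. By polar coordinates one verifies $\int_{B(0,1)}(\xi\cdot h)^2|h|^{-N-2s}\de h = \omega_N|\xi|^2/[2(1-s)]$, and transferring this identity to real space via Plancherel (together with Fubini and translation invariance) yields, for every $u\in H^1_0(\Omega)$ and $s\in(0,1)$,
\begin{equation*}
G_1(u) = (1-s)\int_{\R^N}\int_{B(0,1)}\int_0^1\frac{|\nabla u(x+th)\cdot h|^2}{|h|^{N+2s}}\de t\de h\de x.
\end{equation*}
Subtracting $G_s(u)$ and dividing by $1-s$ produces the splitting $\G^1_s(u) = A_s(u)-B_s(u)$, where
\begin{equation*}
A_s(u)\coloneqq\int_{\R^N}\int_{B(0,1)}\frac{\int_0^1|\nabla u(x+th)\cdot h|^2\de t-|u(x+h)-u(x)|^2}{|h|^{N+2s}}\de h\de x\ge 0
\end{equation*}
by Jensen's inequality applied to $t\mapsto\nabla u(x+th)\cdot h$, and $B_s(u)\coloneqq\int_{\R^N}\int_{B(0,1)^c}|u(x+h)-u(x)|^2|h|^{-N-2s}\de h\de x\ge 0$. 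The limit decomposes analogously as $\G^1 = A_1-B_1$.

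For compactness I would pass to Fourier: by Plancherel $\G^1_s(u)=\int_{\R^N}\mu_s(\xi)|\hat u(\xi)|^2\de\xi$ with a multiplier $\mu_s$ of the type studied in Section \ref{sec:domain-F}. Sharpening the analysis of Lemma \ref{lem:dom} to obtain constants uniform in $s$ close to $1$, I would prove that there exist $s_0\in(0,1)$ and $c,C>0$ with $\mu_s(\xi)\ge c|\xi|^2-C$ for every $\xi\in\R^N$ and $s\in[s_0,1)$. Combined with the $L^2$-bound on $\{u_n\}$, this yields $\|\nabla u_n\|_{L^2}^2\le c^{-1}\bigl(\G^1_{s_n}(u_n)+C\|u_n\|_{L^2}^2\bigr)$, so $\{u_n\}$ is bounded in $H^1_0(\Omega)$. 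By Rellich's theorem, a subsequence converges weakly in $H^1_0(\Omega)$ and strongly in $L^2(\Omega)$ to some $u$, and the liminf inequality established below shows $u\in\mathfrak{D}(\G^1)$.

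For the liminf inequality in (2), after the compactness reduction we may assume $u_n\to u$ strongly in $L^2(\Omega)$ and $u_n\weak u$ weakly in $H^1_0(\Omega)$. I would treat $A_{s_n}$ and $B_{s_n}$ separately. On $B(0,1)^c$ the kernel $|h|^{-N-2s_n}$ is dominated by $|h|^{-N-2s_0}$ for $s_n\ge s_0>0$; strong $L^2$-convergence via Plancherel and dominated convergence yield $B_{s_n}(u_n)\to B_1(u)$. On $B(0,1)$, the integrand defining $A_s$ is nonnegative and $|h|^{-N-2s}$ is monotone increasing in $s$, so $A_{s_n}(u_n)\ge A_{s_0}(u_n)$ whenever $s_n\ge s_0$. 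Since the integrand is a nonnegative convex quadratic form in $u$ (the variance of $t\mapsto\nabla u(x+th)\cdot h$), $A_{s_0}$ is weakly lower semicontinuous on $H^1_0(\Omega)$, giving $\liminf_n A_{s_n}(u_n)\ge A_{s_0}(u)$; sending $s_0\uparrow 1$ and invoking monotone convergence produces $\liminf_n A_{s_n}(u_n)\ge A_1(u)$, whence $\liminf_n\G^1_{s_n}(u_n)\ge\G^1(u)$.

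For the strong limsup (3), on $C^2_c(\Omega)$ the constant sequence $u_n\equiv u$ is a recovery sequence: $A_{s_n}(u)\uparrow A_1(u)$ by monotone convergence and $B_{s_n}(u)\to B_1(u)$ by dominated convergence, while finiteness is Lemma \ref{lemma03112023pom1}. For general $u\in\mathfrak{D}(\G^1)$ I would approximate by $C^\infty_c(\Omega)$-functions in the norm $\|\cdot\|_{\mathfrak{D}(\G^1)}$ of Lemma \ref{lem:dom}: the mollification $u_\epsilon=\rho_\epsilon\ast u$ converges in that norm by dominated convergence in the Fourier characterisation, and a cut-off compatible with the Lipschitz structure of $\partial\Omega$ (e.g.\ an inward translation followed by mollification) produces genuine $C^\infty_c(\Omega)$-approximants; a diagonal extraction combined with the continuity of $\G^1$ in $\|\cdot\|_{\mathfrak{D}(\G^1)}$ completes the recovery. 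If $u\in L^2(\Omega)\setminus\mathfrak{D}(\G^1)$ the constant sequence trivialises the inequality. The most delicate step is this density: the $\mathfrak{D}(\G^1)$-topology is strictly finer than that of $H^1_0$, so the standard $C^\infty_c(\Omega)$-density in $H^1_0(\Omega)$ must be upgraded to convergence in the weighted Fourier norm $\int_{\R^N}|\xi|^2\log(|\xi|^2+1)|\widehat{u_k-u}|^2\de\xi$, requiring a careful coordination of the mollification and the boundary-preserving cut-off.
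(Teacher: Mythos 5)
Your decomposition $\G^1_s=A_s-B_s$ is exactly the one the paper obtains in Lemma \ref{lem:Gs-dec} (with $A_s=\mathcal{J}_s$ of \eqref{31102023pom1}), and your treatment of the liminf inequality --- monotonicity of $A_s$ in $s$, lower semicontinuity at a fixed subcritical exponent, then $s_0\uparrow 1$ by monotone convergence --- is the paper's Lemma \ref{lemma310102023pom}. Your compactness argument is a genuine variant: the paper works in physical space, combining the monotonicity $\mathcal{J}_{s_n}\ge\mathcal{J}_{\bar s}$ with an interpolation inequality $|u|_{H^{\bar s}}^2\le C\|u\|_{L^2}^{2(1-\bar s)}\|\nabla u\|_{L^2}^{2\bar s}$ to extract an $H^1$ bound, whereas you propose a uniform Fourier-multiplier bound $\mu_s(\xi)\ge c|\xi|^2-C$. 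That bound is true, but as stated it is only asserted; the clean way to get it is the same monotonicity you already use elsewhere, namely $\mu_s(\xi)=m_s(\xi)-b_s(\xi)$ with $0\le b_s\le 2N\omega_N/s$ and $m_s(\xi)\ge m_{1/2}(\xi)\ge c|\xi|^2-C$ for $s\ge\tfrac12$, where $m_s$ is the analogue of \eqref{eq:mult} with exponent $N+2s$. Either route works.

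The genuine problem is in your proof of (3). You prove the pointwise convergence $\G^1_{s_n}(u)\to\G^1(u)$ only for $u\in C^2_c(\Omega)$ and then try to reach general $u\in\D(\G^1)$ by density in the norm $\|\cdot\|_{\D(\G^1)}$, via mollification plus a boundary-preserving cut-off and a diagonal extraction. You correctly flag this as the delicate step, and it is: multiplication by a cut-off is not obviously continuous in the logarithmically weighted Fourier norm of Lemma \ref{lem:dom} on a merely Lipschitz domain, and the continuity of $\G^1$ with respect to $\|\cdot\|_{\D(\G^1)}$ also needs an argument (polarization plus the two-sided multiplier bounds). None of this is carried out, so as written the limsup inequality is not established for general $u\in\D(\G^1)$. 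The detour is also unnecessary: monotone convergence requires no regularity of $u$. For \emph{every} $u\in H^1_0(\Omega)$ the integrand of $A_s$ is a nonnegative measurable function increasing in $s$, so $A_{s_n}(u)\uparrow A_1(u)$ (finite or not), while $B_{s_n}(u)\to B_1(u)$ by dominated convergence; for $u\in L^2(\Omega)\setminus H^1_0(\Omega)$ both $\G^1_{s_n}(u)$ and $\G^1(u)$ are $+\infty$. Hence the constant sequence $u_n\equiv u$ is a recovery sequence for every $u\in L^2(\Omega)$, which is exactly how the paper proves (iii), and no density or approximation argument is needed at all.
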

	
The next subsections are devoted to the proof of Theorem \ref{thm:main}.
	
An analogous result in the case of $\Gamma$-convergence of fractional perimeters has been obtained in \cite{cesaroni2020second}.
Our strategy adapts the one in that work to the current setting.

\subsubsection{Coercivity and lower semicontinuity of $ \G_s^1$} 

In the next proposition we prove the lower semicontinuity of the functionals $\G_s^1$ for fixed $s\in(0,1)$.
We also provide a compactness result (in the weak $H^1$ topology) 
for sequences with bounded $L^2$-norm and bounded $\G_s^1$-energy.

\begin{proposition}\label{proplsc+com+sfix}
Let $s \in (0,1)$  be fixed and let $\G^1_s$ be defined as in \eqref{G_s^1}.
Let $ \{u_n\}_{n \in \N}\subset L^2(\Omega)$ such that
\begin{equation}\label{eq:unif-bound}
\| u_n \|_{L^2(\Omega)} \leq m \quad \text{and} \quad \G^1_s(u_n) \leq M
\end{equation}
for some $M$, $m>0$ and for every $n \in \N$, then $\|\nabla u_n\|_{L^2(\Omega)} \leq M'$ for every $n \in \N$, for some positive constant $M'$ depending on $M$, $s$ and $N$.

In particular, the sequence $ \{u_n\}_{n \in \N}$ converges in the weak $H^1$ topology, up to a subsequence, to a function $u\in H^1_0(\Omega)$.

Moreover, the functional $\G_s^1$ is lower semicontinous with respect to the weak $L^2$ topology.  
\end{proposition}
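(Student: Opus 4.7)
The plan is to deduce all three assertions from a single coercivity estimate of the form $G_s(u) \leq c_1 \|\nabla u\|_{L^2(\RN)}^2 + c_2 \|u\|_{L^2(\RN)}^2$ with $c_1 < \omega_N/2$. To produce it I would fix a radius $r \in (0,1)$ and split the Gagliardo double integral according to $|h| \leq r$ or $|h| > r$. On the near region, combining the identity $u(x+h) - u(x) = \int_0^1 \nabla u(x+th)\cdot h \, dt$ with Jensen's inequality, then a change of variable $x' = x+th$ and polar coordinates in $h$, yields
\[
(1-s) \int_{\RN}\int_{B(0,r)} \frac{|u(x+h)-u(x)|^2}{|h|^{N+2s}} \, dh \, dx \leq \frac{\omega_N \, r^{2-2s}}{2} \|\nabla u\|_{L^2}^2,
\]
whose prefactor is \emph{strictly} less than $\omega_N/2$ because $r < 1$ and $s < 1$. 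On the far region the elementary bound $|u(x+h)-u(x)|^2 \leq 2|u(x+h)|^2 + 2|u(x)|^2$ together with the integrability of $|h|^{-N-2s}$ at infinity produces a constant multiple of $\|u\|_{L^2}^2$. Substituting the two estimates into the identity $G_1(u_n) = G_s(u_n) + (1-s) \G_s^1(u_n)$, using $\G_s^1(u_n) \leq M$, and absorbing $\|\nabla u_n\|_{L^2}^2$ on the left-hand side yields the uniform bound $\|\nabla u_n\|_{L^2} \leq M'$. Weak compactness in $H_0^1(\Omega)$ then follows from Banach--Alaoglu.

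For the weak-$L^2$ lower semicontinuity I would argue along subsequences. If $\liminf_n \G_s^1(u_n) = +\infty$ there is nothing to prove; otherwise I would pass to a subsequence along which $\G_s^1(u_n)$ is bounded, which by the coercivity step is bounded in $H_0^1(\Omega)$. Rellich's theorem then yields $u_n \to u$ strongly in $L^2(\Omega)$ and, by uniqueness of the weak-$L^2$ limit, $u_n \weak u$ in $H_0^1(\Omega)$. The classical interpolation inequality $\|v\|_{H^s(\RN)} \leq C\|v\|_{L^2}^{1-s} \|v\|_{H^1}^{s}$, applied to the zero-extensions of $u_n - u$ (which lie in $H^1(\RN)$ since $u_n - u \in H_0^1(\Omega)$), upgrades the convergence to strong convergence in $H^s(\RN)$, hence $G_s(u_n) \to G_s(u)$. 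Combined with the standard lower semicontinuity $\liminf_n G_1(u_n) \geq G_1(u)$ under weak-$H^1$ convergence, this gives
\[
\liminf_n \G_s^1(u_n) = \frac{\liminf_n G_1(u_n) - \lim_n G_s(u_n)}{1-s} \geq \G_s^1(u).
\]

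The main obstacle is ensuring the strict inequality $c_1 < \omega_N/2$ so that the absorption step closes; this is secured transparently by taking $r < 1$, which gives $r^{2-2s} < 1$ for fixed $s \in (0,1)$. A secondary point is the continuity $G_s(u_n) \to G_s(u)$ along the bounded-in-$H^1$ subsequence, which I would establish via the standard interpolation between $L^2$ and $H^1$ combined with the boundedness of the zero-extension operator from $H_0^1(\Omega)$ into $H^1(\RN)$; no further subtlety is expected since $s \in (0,1)$ is fixed throughout.
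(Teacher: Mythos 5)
Your proof is correct, and the coercivity step takes a genuinely different route from the paper. The paper bounds $G_s(u)$ by the multiplicative interpolation inequality $G_s(u)\le C\|u\|_{L^2}^{2(1-s)}\|\nabla u\|_{L^2}^{2s}$ and then deduces the $H^1$ bound from the coercivity of $t\mapsto t-Ct^{s}$ applied to $t=G_1(u_n)$; you instead split the kernel at radius $r<1$, use the fundamental theorem of calculus plus Jensen on the near part to get the explicit prefactor $\tfrac{\omega_N}{2}r^{2-2s}<\tfrac{\omega_N}{2}$, and absorb the gradient term linearly. Your computation of that prefactor is right ($\,(1-s)\int_0^r\rho^{1-2s}\,\mathrm{d}\rho=\tfrac{r^{2-2s}}{2}$ together with $\int_{\partial B(0,1)}|\nabla u\cdot\nu|^2\,\mathrm{d}\mathcal{H}^{N-1}=\omega_N|\nabla u|^2$), and the linear absorption is arguably more transparent than the paper's superlinear argument; it is also closer in spirit to the decomposition the paper later uses in Lemma~\ref{lem:Gs-dec}. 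The paper's interpolation route, on the other hand, is shorter and reuses the same inequality for the semicontinuity step. For the lower semicontinuity your argument is essentially the paper's: both reduce to continuity of $G_s$ along strongly $L^2$-convergent, $H^1$-bounded sequences (you via the norm form of the interpolation inequality applied to $u_n-u$, the paper via a Cauchy--Schwarz factorization $|G_s(v_n)-G_s(v)|\le G_s(v_n+v)^{1/2}G_s(v_n-v)^{1/2}$ followed by interpolation), combined with weak-$H^1$ lower semicontinuity of $G_1$. One cosmetic remark: as your argument makes explicit, the constant $M'$ necessarily depends on $m$ as well, which the statement omits but the paper's own proof acknowledges.
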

\begin{proof}
Notice first that for every $u \in H^1_0(\Omega)$ with $ \| u \|_{L^2(\Omega)} \leq m$ by  the interpolation inequality, see \cite{leoni2023first, van2023fractional},  we have
\begin{equation}\label{30102023pom1}
	\begin{split}
\int_{\R^N} \int_{\R^N} \frac{\vert u(x)-u(y)\vert^2}{\vert x-y \vert^{N+2s}}\, \de x \de y &\leq C\| u \|_{L^2(\Omega)}^{2(1-s)} \| \nabla u \|_{L^2(\Omega)}^{2 s} \\
	&\leq Cm^{2(1-s)} \| \nabla u \|_{L^2(\Omega)}^{2 s}=Cm^{2(1-s)}N^s G_1(u)^{s}.
\end{split}
\end{equation}
Let $ \{u_n\}_{n\in \N}$ be as in \eqref{eq:unif-bound}, by definition of $\G_s^1$ and by \eqref{30102023pom1} we have
\begin{equation*}
	(1-s)M \geq G_1(u_n)-G_s(u_n) \geq G_1(u_n)-(1-s)Cm^{2(1-s)}N^s G_1(u_n)^{s}.
\end{equation*}
From this we conclude that necessarily $\|\nabla u_n\|_{L^2(\Omega)} = \big(\frac{2}{\omega_N}G_1(u_n)\big)^\frac{1}{2} \leq M'$, where $M'$ is a constant which depends on $m$, $M$, $s$ and $N$.
As a consequence, by the Rellich-Kondrachov theorem, we have that $u_n$ is compact in the weak $H^1$ topology and its cluster points are in $H^1_0(\Omega)$ by \eqref{eq:unif-bound}.

Now, consider a sequence $\{v_n\}_{n\in\N}\subset L^2(\Omega)$ such that $v_n \weak v$ in $L^2(\Omega)$ and that $\G_s^1(v_n) \leq M$.
Since, in particular $\|v_n\|_{L^2(\Omega)}$ is bounded, by the previous argument we get that $\|\nabla v_n\|_{L^2(\Omega)} \leq M'$ where $M'$ depending on $M$, $s$, and $N$.
Again by the Rellich-Kondrachov theorem we have and that $v_n \rightarrow u$ in $L^2(\Omega)$.
Notice now that, by H{\"o}lder's inequality, we get
\begin{align*}
&|G_s(v_n)-G_s(v)| \\
& \quad \le \int_{\R^N}\int_{\R^N}\frac{|(v_n(x)+v(x))-(v_n(y)+v(y))||(v_n(x)-v(x))-(v_n(y)-v(y))|}{|x-y|^{N+2s}}\de x \de y \\
& \quad \le G_s(v_n+v)^\frac{1}{2} G_s(v_n-v)^\frac{1}{2}.
\end{align*}
Using again the interpolation inequality as in the first line of \eqref{30102023pom1} we obtain
$$
|G_s(v_n)-G_s(v)| \le \tilde C \|v-v_n\|_{L^2(\Omega)}^{2(1-s)}
$$
for some $\tilde C$ depending on $s$, $N$ and $M'$.
This implies the continuity of $G_s$ with respect to the strong $L^2$ topology.
This, along with the (well known) lower semicontinuity of $G_1$ with respect the weak $H^1$ convergence yields the conclusion.
\end{proof}

A crucial remark in our analysis is that $\G^1_s$ can be rewritten as a sum of two terms resembling the formulation \eqref{eq:G1-Glim} of the limit functional $\G^1$; one term continuously convergent (as $s\to 1^-$) and the other monotone increasing in $s$.
This on the one hand will facilitate the proof of the $\Gamma$-convergence result, essentially reducing it to proving a pointwise limit, and on the other will allow us to transfer the coerciveness of $\G^1_{\bar s}$ (for some $\bar{s}$ fixed) to the equicoerciveness of the sequence $\G^1_s$.

\begin{lemma}\label{lem:Gs-dec}
	Let $u \in H_0^1(\Omega)$ then for every $s \in (0,1)$, we have
	\begin{equation}\label{eq:Gs-dec}
		\begin{split}
			\G^1_s(u) &=  -\frac{N\omega_N}{s}\int_{\R^N} \vert u(x) \vert^2 \, \de x+ 2 \int_{\R^N}\int_{B(0,1)^c} \frac{u(x+h)u(x)}{\vert h \vert^{N+2s}} \de h\de x \\
			& \qquad +\int_{\R^N} \int_{B(0,1)} \frac{\int_{0}^{1} \vert \nabla u(th+x) \cdot h \vert^2 \de t-\vert u(x+h)-u(x)\vert^2}{\vert h \vert^{N+2s}} \de h \, \de x.
		\end{split}
	\end{equation}
\end{lemma}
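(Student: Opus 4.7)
The plan is to work from the definition
\[
\G^1_s(u) = \frac{G_1(u)}{1-s} - \int_{\R^N}\int_{\R^N} \frac{|u(y)-u(x)|^2}{|y-x|^{N+2s}}\de y\de x
\]
and massage the two pieces separately so as to match the three summands on the right-hand side of \eqref{eq:Gs-dec}. The main task is to recognize the local term $G_1(u)/(1-s)$ as a nonlocal integral over $\R^N\times B(0,1)$, after which the claim reduces to an elementary splitting.

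The first step is to rewrite
\[
\int_{\R^N}\int_{B(0,1)} \frac{\int_0^1 |\nabla u(x+th)\cdot h|^2 \de t}{|h|^{N+2s}}\de h\de x = \frac{G_1(u)}{1-s}.
\]
To prove this, I would first apply Fubini's theorem (legal because $u\in H^1_0(\Omega)$ and $s<1$, so the integrand is nonnegative and integrable) and change variables $y=x+th$ to eliminate the $t$-dependence, obtaining $\int_{B(0,1)}\int_{\R^N} |\nabla u(y)\cdot h|^2 |h|^{-N-2s}\de y\de h$. Passing to polar coordinates $h=r\omega$ on $B(0,1)$ produces the factor $\int_0^1 r^{1-2s}\de r = \frac{1}{2(1-s)}$, while the angular integral $\int_{\Snmu} |\nabla u(y)\cdot\omega|^2 \de\h^{N-1}(\omega) = \omega_N |\nabla u(y)|^2$ (from $\int_{\Snmu}\omega_i\omega_j\de\h^{N-1} = \omega_N \delta_{ij}$). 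Multiplying gives exactly $\frac{\omega_N}{2(1-s)}\|\nabla u\|_{L^2(\R^N)}^2 = G_1(u)/(1-s)$.

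The second step is to split the Gagliardo double integral according to whether $h:=y-x$ lies in $B(0,1)$ or its complement. The piece over $B(0,1)$ combines directly with the gradient representation from Step 1 to produce the third summand of \eqref{eq:Gs-dec}. For the piece over $B(0,1)^c$, I expand $|u(x+h)-u(x)|^2 = |u(x+h)|^2 - 2u(x+h)u(x) + |u(x)|^2$. The cross term yields $+2\int_{\R^N}\int_{B(0,1)^c} u(x+h)u(x)|h|^{-N-2s}\de h\de x$, which matches the second summand. For the two square terms, I use translation invariance (Fubini again, after integrating in $x$ first) together with the polar-coordinates computation $\int_{B(0,1)^c} |h|^{-N-2s}\de h = N\omega_N \int_1^\infty r^{-1-2s}\de r = \frac{N\omega_N}{2s}$, which contributes $-\frac{N\omega_N}{s}\|u\|_{L^2(\R^N)}^2$ and thus matches the first summand.

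I do not expect any serious obstacle: all integrands arising from the split are absolutely integrable for $s\in(0,1)$ because $u$ is compactly supported and $H^1$-regular (so $r^{1-2s}$ is integrable near $0$ and $r^{-1-2s}$ is integrable at $\infty$), which legalizes every application of Fubini. The only point that deserves care is to observe that the integrand of the third summand in \eqref{eq:Gs-dec} is nonnegative (by Jensen applied to the Fundamental Theorem of Calculus, as noted in Remark~\ref{rmk:alt}), which ensures that the rearrangement produces a genuinely convergent expression term by term rather than just a formal identity.
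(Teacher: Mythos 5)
Your proposal is correct and follows essentially the same route as the paper: the identification $\frac{G_1(u)}{1-s}=\int_{\R^N}\int_{B(0,1)}|h|^{-N-2s}\int_0^1|\nabla u(x+th)\cdot h|^2\de t\de h\de x$ via the change of variables and polar coordinates (with the angular identity $\int_{\Snmu}|\nabla u\cdot\nu|^2\de\h^{N-1}=\omega_N|\nabla u|^2$), combined with splitting the Gagliardo integral at $|h|=1$ and expanding the square on $B(0,1)^c$. All applications of Fubini are justified exactly as you say, since each separate piece is absolutely convergent for $u\in H^1_0(\Omega)$ and $s\in(0,1)$.
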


\begin{proof}
	We observe that for all $u \in H_0^1(\Omega)$
	\begin{equation}\label{eq:Gs-dec-p1} 
		\begin{split}
			\int_{\R^N} \int_{\R^N} \frac{\vert u(x+h)-u(x) \vert^2}{\vert h \vert^{N+2s}}\de x\, \de h &= \frac{N\omega_N}{s}\int_{\R^N} \vert u(x) \vert^2 \de x-2\int_{\R^N}\int_{B(0,1)^c} \frac{u(x+h)u(x)}{\vert h \vert^{N+2s}} \de h\de x \\
			& \qquad + \int_{\R^N} \int_{B(0,1)} \frac{\vert u(h+x)-u(x) \vert^2}{\vert h \vert^{N+2s}}\de h\de x.
		\end{split}
	\end{equation}
	By summing and subtracting $\int_0^1|\nabla u(x+th)\cdot h|^2\de t$, we rewrite the last term on the right-hand side of \eqref{eq:Gs-dec-p1} as
	\begin{equation}\label{eq:Gs-dec-p2}
		\begin{split}
			\int_{\R^N} &\int_{B(0,1)} \frac{\vert u(h+x)-u(x) \vert^2}{\vert h \vert^{N+2s}}\de h \de x = \int_{\R^N} \int_{B(0,1)} \int_{0}^1 \frac{\vert \nabla u(x+th) \cdot h \vert^2}{\vert h \vert^{N+2s}} \de t\de h \de x \\
			& \qquad + \int_{\R^N} \int_{B(0,1)} \frac{\vert u(h+x)-u(x) \vert^2 -\int_{0}^{1} \vert \nabla u(x+th) \cdot h \vert^2 \de t}{\vert h \vert^{N+2s}} \de h\de x.
		\end{split}
	\end{equation}
	We now compute the first integral on the right-hand side above using the integration in polar coordinate and the change of variable $\xi=x+th$, denoting $\nu= h / \vert h \vert$;
	\begin{equation}\label{eq:Gs-dec-p3}
		\begin{split}
			\int_{\mathbb{R}^N} \int_{B(0,1)} \int_{0}^{1} \frac{\vert \nabla u(x+th) \cdot \frac{h}{\vert h \vert} \vert^2 }{\vert h\vert^{N+2s-2}} \de t\de h\de x &
			=\int_{\mathbb{R}^N}\int_{B(0,1)}\int_{0}^{1} \frac{\vert \nabla u (\xi) \cdot \frac{h}{\vert h \vert} \vert^2}{\vert h \vert^{N+2s-2}} \,\de t \, \de h \,\de \xi \\
			&= \int_{\mathbb{R}^N} \int_{0}^{1} \rho^{1-2s} \int_{\partial B(0,1)} \vert \nabla u(\xi) \cdot \nu \vert^2 \, \de  \mathcal{H}^{N-1}(\nu)  \de\rho\de\xi \\
			&= \frac{\omega_N}{2(1-s)} \int_{\mathbb{R}^N} \vert \nabla u(\xi) \vert^2 \, \de \xi,
		\end{split}
	\end{equation}
	where in the last step we have used that
	\begin{equation*}
		\begin{split}
			\int_{\partial B(0,1)} \vert \nabla u(\xi) \cdot\nu \vert^2 \,\de \mathcal{H}^{N-1}(\nu) &= \int_{\partial B(0,1)} \vert \nabla u(\xi) \vert^{2} \bigg| \frac{\nabla u(\xi) }{\vert \nabla u(\xi)\vert}\cdot\nu \bigg|^2\, \de \mathcal{H}^{N-1}(\nu) \\
			&=\vert \nabla u(\xi) \vert^{2} \int_{\partial B(0,1)} \vert e_i \cdot \nu \vert^2 \,\de  \mathcal{H}^{N-1}(\nu) \\
			&=  \frac{\vert \nabla u(\xi) \vert^2}{N} \sum_{i=1}^{N} \int_{\partial B(0,1)} \vert e_i \cdot \nu \vert^2 \, \de  \mathcal{H}^{N-1}(\nu)= \vert \nabla u(\xi) \vert^2 \omega_N.
		\end{split}
	\end{equation*}
	Inserting formulas \eqref{eq:Gs-dec-p2} and \eqref{eq:Gs-dec-p3} into \eqref{eq:Gs-dec-p1} we obtain
	\begin{equation*}
		\begin{split}
			\int_{\R^N} \int_{\R^N} & \frac{\vert u(h+x)-u(x) \vert^2}{\vert h \vert^{N+2s}}\de x\de h- \frac{\omega_N}{2(1-s)} \int_{\R^N} \vert \nabla u(x) \vert^2 \de x \\
			& =\frac{N\omega_N}{s}\int_{\R^N} \vert u(x) \vert^2 \de x- \int_{\R^N} \int_{B(0,1)^c} \frac{u(x+h)u(x)}{\vert h \vert^{N+2s}}  \de h \de x \\
			& \qquad +\int_{\R^N} \int_{B(0,1)} \frac{\vert u(h+x)-u(x) \vert^2 -\int_{0}^{1} \vert \nabla u(x+th) \cdot h \vert^2 \de t}{\vert h \vert^{N+2s}} \de h\de x
		\end{split}
	\end{equation*}	
	and the claim is proved.
\end{proof}

As commented above, the first two terms on the right-hand side of \eqref{eq:Gs-dec} converges locally uniformly as $s\to1^-$.
We then focus on the third term.
For the sake of brevity, we introduce the following notation;
\begin{equation}\label{31102023pom1}
\mathcal{J}_s(u):=\int_{\R^N} \int_{B(0,1)} \frac{\int_{0}^{1} \vert \nabla u(x+th) \cdot h \vert^2 \de t-\vert u(x+h)-u(x)\vert^2}{\vert h \vert^{N+2s}} \de h \de x.
\end{equation}

In the next lemma we prove the positivity and monotonicity of $\mathcal{J}_s$ which will be useful to prove the $\Gamma$-convergence, see \cite{braides2002gamma, dal2012introduction}.

\begin{lemma}[positivity and monotonicity]\label{monotonicitylemma}
	For $s\in(0,1)$ let $\mathcal{J}_s$ be defined as in \eqref{31102023pom1}.
	Then, for every $u\in H^1_0(\R^n)$, $\mathcal{J}_s(u)\ge 0$ and the map $s \mapsto \mathcal{J}_s(u)$ is monotone increasing in $s$, \emph{i.e.}\ $\mathcal{J}_{s_1}(u)\le \mathcal{J}_{s_2}(u)$ for $0<s_1\le s_2<1$.
\end{lemma}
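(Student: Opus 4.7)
The statement decomposes into two independent assertions, and both follow directly from the sign and structure of the integrand in \eqref{31102023pom1}. My plan is to treat positivity first, since monotonicity piggybacks on it.

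For positivity, the key observation is that the numerator of the integrand is pointwise non-negative on $\R^N\times B(0,1)$. By the Fundamental Theorem of Calculus applied to the smooth case (and a density argument for general $u\in H^1_0(\Omega)$),
\[
u(x+h)-u(x) = \int_{0}^{1} \nabla u(x+th)\cdot h \de t,
\]
so by Jensen's inequality (equivalently, Cauchy--Schwarz in $L^2([0,1])$),
\[
|u(x+h)-u(x)|^2 \leq \int_{0}^{1} |\nabla u(x+th)\cdot h|^2 \de t.
\]
Thus the numerator in \eqref{31102023pom1} is $\geq 0$, and since the denominator $|h|^{N+2s}$ is positive, the integrand is non-negative; integrating yields $\mathcal{J}_s(u)\geq 0$. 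Note that this is already remarked in passing just before \autoref{lemma03112023pom1}.

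For monotonicity, fix $0<s_1\leq s_2<1$ and observe that the only $s$-dependence sits in the denominator $|h|^{N+2s}$. Since the domain of integration in $h$ is the unit ball $B(0,1)$, we have $|h|<1$, and therefore the map $s\mapsto|h|^{-(N+2s)}=|h|^{-N}e^{-2s\log|h|}$ is non-decreasing in $s$ (because $\log|h|<0$). Combined with the pointwise non-negativity of the numerator established in the previous step, this gives the pointwise inequality
\[
\frac{\int_{0}^{1} |\nabla u(x+th)\cdot h|^2\de t-|u(x+h)-u(x)|^2}{|h|^{N+2s_1}} \leq \frac{\int_{0}^{1} |\nabla u(x+th)\cdot h|^2\de t-|u(x+h)-u(x)|^2}{|h|^{N+2s_2}}
\]
for a.e.\ $(x,h)\in\R^N\times B(0,1)$. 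Integrating over $\R^N\times B(0,1)$ yields $\mathcal{J}_{s_1}(u)\leq \mathcal{J}_{s_2}(u)$.

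There is no real obstacle here: the only subtle point is that we are implicitly using Tonelli's theorem (legitimate since the integrand is non-negative) to justify both the iterated integral formulation in \eqref{31102023pom1} and the pointwise comparison argument. The Jensen step requires no regularity beyond $H^1_0(\Omega)$, since $t\mapsto \nabla u(x+th)\cdot h$ lies in $L^2([0,1])$ for a.e.\ $x$ by Fubini, so both assertions hold on the whole space $H^1_0(\Omega)$ without extra hypotheses.
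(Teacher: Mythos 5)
Your proof is correct and follows exactly the same route as the paper, which simply cites the Fundamental Theorem of Calculus, Jensen's inequality, and the monotonicity of $s\mapsto|h|^{-N-2s}$ for $|h|<1$; you have merely written out in full what the paper states in one line. The extra remarks on Tonelli and on extending the FTC identity to $H^1_0$ by density are fine and harmless.
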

\begin{proof}
The claim is a straightforward consequence of the fundamental theorem of calculus, Jensen's inequality and the fact that $s\mapsto|h|^{-N-2s}$ is increasing in $B(0,1)$.
\end{proof}

We end this section by noticing that for all $ s \in (0,1]$ the functionals 
\begin{equation}\label{C0-itaintuu}
	(u,v) \mapsto \int_{\R^N} \int_{B(0,1)^c} \frac{u(x+h)v(x)}{\vert h \vert^{N+2s}}\,\de h\de x
\end{equation} 
are continuous in $L^2(\Omega) \times L^2(\Omega)$ since, by H{\"o}lder's inequality,
\begin{equation}\label{01112023matt1}
	\Big|\int_{\R^N} \int_{B(0,1)^c} \frac{u(x+h)v(x)}{\vert h \vert^{N+2s}}\,\de h\de x \Big| \leq \frac{N\omega_N}{2s}  \| u \|_{L^2(\Omega)} \| v \|_{L^2(\Omega)}.
\end{equation}
We will exploit this property quite often in the sequel.

\subsubsection{Proof of the strong limsup inequality}
We now prove the pointwise convergence $\G^1_s$ which, in particular, yields the strong limsup inequality,
claimed in Theorem \ref{thm:main}(iii).

\begin{proof}[Proof of Theorem \ref{thm:main}(iii)]
	We claim that for every $u \in L^2(\Omega)$
	\begin{equation}\label{prop:plim}
		\lim_{s\to1^-} \G^1_s(u)= \G^1(u).
	\end{equation}
	By Lemma \ref{lem:Gs-dec} it is sufficient to study the pointwise limit of $\mathcal{J}_s$ since the first two terms on the right-hand side of \eqref{eq:Gs-dec} converge pointwise (actually locally uniformly) to the same expression with $s=1$, \emph{e.g.}\ by Lebesgue's dominated convergence theorem. Indeed, since for all $h$, $x \in \R^N$
$$
\int_{0}^{1} \vert \nabla u(x+th) \cdot h \vert^2 \de t-\vert u(h+x)-u(x) \vert^2\geq 0,
$$
and $\mathcal{J}_s$ is increasing in $s$ by Lemma \ref{monotonicitylemma}, by monotone convergence
\begin{equation}\label{eq:plim1}
\lim_{s\to1^-} \mathcal{J}_s(u) = \int_{\R^N} \int_{B(0,1)} \frac{\int_{0}^{1} \vert \nabla u(th+x) \cdot h \vert^2 \de t-\vert u(h+x)-u(x) \vert^2}{\vert h \vert^{N+2}}\de h\de x.
\end{equation}
This (as explained at the beginning of the proof) implies the pointwise limit of $\G^1_s$ which yields the claim by taking $u_n\equiv u$ as a recovery sequence.
\end{proof}

The pointwise limit of the monotone family $\mathcal{J}_s$ directly implies the $\Gamma$-convergence of $\G^1_s$ to $\G^1$; indeed increasing sequences $\Gamma$-converge to their pointwise limit (if this is lower semicontinuous) \cite{dal2012introduction} and by stability under continuous perturbations \cite{dal2012introduction}.

Since we will prove the stronger Mosco-convergence, we need to provide first a compactness result.

\subsubsection{Proof of the compactness}
The next proposition is devoted to the proof of the compactness property claimed in Theorem \ref{thm:main}(i).
To prove it, we follow the strategy exploited in the proof of \cite[Proposition 2.6]{cesaroni2020second}, combining the monotonicity of $\mathcal{J}_s$ (Lemma \ref{monotonicitylemma}) and the coercivity of the energy as $s$ fixed (Proposition \ref{proplsc+com+sfix}).
\begin{proposition}[compactness]\label{compcompcompthm}
Let $s_n \to 1^-$ and let $\G^1_s$ be defined as in \eqref{G_s^1}.
Let $\{ u_n\}_{n \in \N} \subset L^2(\Omega)$ such that
$$
\|u_n\|_{L^2(\Omega)} \leq m \quad \text{and} \quad \G_s^1(u_n) \leq M
$$
for all $n \in \N$, for some $m>0$ and $M\in\R$ independent of $n$.
Then $\|\nabla u_n\|_{L^2(\Omega)}\leq M'$ for every $n \in \N$ for some $M'>0$ depending on $M$, $m$ and $N$, and the sequence $\{u_n\}_{n \in \N}$ converges in the weak $H^1$ topology, up to a subsequence, to a function $u \in \mathfrak{D}(\G^1)$.
\end{proposition}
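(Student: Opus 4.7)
\medskip

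\noindent\textbf{Proof plan.}
The plan is to combine, as the statement suggests, the decomposition of Lemma \ref{lem:Gs-dec} with the monotonicity of Lemma \ref{monotonicitylemma}, in order to transfer the bound on $\G^1_{s_n}$ down to a fixed subcritical exponent $\bar s$, where Proposition \ref{proplsc+com+sfix} yields the desired $H^1$-control and weak compactness.

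First I would apply Lemma \ref{lem:Gs-dec} at the exponent $s_n$: since $s_n \to 1^-$ we may assume $s_n \ge 1/2$, so that the first term on the right-hand side of \eqref{eq:Gs-dec} is bounded in absolute value by $2N\omega_N m^2$, and the second one is controlled by the same quantity via the H{\"o}lder bound \eqref{01112023matt1}. Combined with $\G^1_{s_n}(u_n) \le M$, this yields a uniform bound $\mathcal{J}_{s_n}(u_n) \le C_0$, with $C_0$ depending only on $M$, $m$, and $N$. For any fixed $\bar s \in (1/2, 1)$, Lemma \ref{monotonicitylemma} then gives $\mathcal{J}_{\bar s}(u_n) \le \mathcal{J}_{s_n}(u_n) \le C_0$ for $n$ large enough that $s_n \ge \bar s$, and reassembling \eqref{eq:Gs-dec} at the level $\bar s$ produces $\G^1_{\bar s}(u_n) \le C_1$ uniformly in $n$. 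The first assertion of Proposition \ref{proplsc+com+sfix} then supplies a uniform bound $\|\nabla u_n\|_{L^2(\Omega)} \le M'$, and Rellich-Kondrachov compactness provides, up to a subsequence, $u_n \weak u$ in $H^1_0(\Omega)$.

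To conclude that $u \in \D(\G^1)$, I would use the weak $L^2$-lower semicontinuity of $\G^1_{\bar s}$ (also from Proposition \ref{proplsc+com+sfix}) to deduce $\G^1_{\bar s}(u) \le \liminf_n \G^1_{\bar s}(u_n) \le C_1$. Undoing the decomposition once more gives $\mathcal{J}_{\bar s}(u) \le C_2$ for a constant $C_2$ that, by the same H{\"o}lder estimate \eqref{01112023matt1}, remains bounded as $\bar s \to 1^-$ provided $\bar s$ stays above $1/2$. Since the integrand defining $\mathcal{J}_{\bar s}(u)$ is nonnegative and the kernel $|h|^{-N-2\bar s}$ is monotone increasing in $\bar s$ on $B(0,1)$, the monotone convergence theorem yields
$$
\int_{\R^N} \int_{B(0,1)} \frac{\int_0^1 |\nabla u(x+th) \cdot h|^2 \de t - |u(x+h)-u(x)|^2}{|h|^{N+2}} \de h \de x = \lim_{\bar s \to 1^-} \mathcal{J}_{\bar s}(u) \le C_2,
$$
which is exactly the integrability condition \eqref{eq:domaindef} defining $\D(\G^1)$.

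The main technical point I foresee is keeping the constants $C_1$, $C_2$ uniform in $\bar s$ while bootstrapping; this is ensured by invoking \eqref{01112023matt1} only with $\bar s$ bounded away from zero, and by exploiting the fact that $\mathcal{J}_{\bar s}$ is monotone in $\bar s$, so that the bound obtained at the critical level $s_n$ transfers for free to any $\bar s \le s_n$.
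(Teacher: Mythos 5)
Your proof is correct and follows essentially the same route as the paper's: decompose $\G^1_{s_n}$ via Lemma \ref{lem:Gs-dec}, use the monotonicity of $\mathcal{J}_s$ from Lemma \ref{monotonicitylemma} to transfer the uniform energy bound down to a fixed subcritical $\bar s$, invoke Proposition \ref{proplsc+com+sfix} for the uniform $H^1$ bound and weak compactness, and conclude $u\in\D(\G^1)$ by lower semicontinuity followed by monotone convergence in $\bar s$. The only (cosmetic) difference is in the last step, where the paper applies Fubini and the weak $H^1$ lower semicontinuity of the quadratic form directly to $\mathcal{J}_{\bar s}(u_n)$, whereas you reuse the weak $L^2$ lower semicontinuity of $\G^1_{\bar s}$ and then undo the decomposition; the two are equivalent.
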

\begin{proof}
	Without loss of generality, we can assume that $s_n\nearrow1$.
	From Proposition \ref{proplsc+com+sfix} we have that  $u_n \in H_0^1(\Omega)$ for all $ n \in \N$.
	By Lemma \ref{monotonicitylemma} we obtain
	\begin{equation}\label{30102023pom2}
		\begin{split}
			|M| \geq  \G_{s_n}^1(u_n)
			&= \mathcal{J}_{s_n}(u_n)-\int_{\R^N} \int_{B(0,1)^c} \frac{\vert u_n(x+h)-u_n(x) \vert^2}{\vert h \vert^{N+2s_n}}\de h\de x  \\
			&\geq \mathcal{J}_{s_n}(u_n)-2 m^2\frac{N\omega_N}{s_n}
		\end{split}
	\end{equation}
	where in the last inequality we have used that
\begin{multline*}
\int_{\R^N} \int_{B(0,1)^c} \frac{\vert u_n(x+h)-u_n(x) \vert^2}{\vert h \vert^{N+2 s_n}}\de h\de x  \\
=  -2\int_{\R^N} \int_{B(0,1)^c} \frac{u_n(x+h)u_n(x) }{\vert h \vert^{N+2 s_n}} \de h\de x+ \frac{N \omega_N}{s_n} \int_{\R^N} \vert u_n(x) \vert^2 \de x
\end{multline*}
	 and inequality \eqref{01112023matt1}.		
	Fix now $\bar{n} \in \N$ such that $ s_{\bar{n}}>\frac{1}{2}$.
We claim that there exists $M'$, depending on $M,m,N$ but independent of $n$, such that $\|\nabla u_n\|_{L^2(\Omega)} \leq M'$ for every $n\in\N$.
	Indeed, for every $n\geq \bar{n}$, using the monotonicity of $s \mapsto \mathcal{J}_s(u_n)$, see Lemma \ref{monotonicitylemma}, and by \eqref{30102023pom2} we have that
	\begin{equation}\label{09112023pom1}
\begin{split}
		|M| +2m^2\frac{N\omega_N}{s_n} &\geq \mathcal{J}_{s_n}(u_n) \geq \mathcal{J}_{s_{\bar{n}}}(u_n) = \G_{s_{\bar{n}}}^1(u_n)-\int_{\R^N} \int_{B(0,1)^c} \frac{\vert u_n(x+h)-u_n(x)\vert^2}{\vert h \vert^{N+2s_{\bar{n}}}}\de h\de x \\
&\geq  \G_{s_{\bar{n}}}^1(u_n) - 2N\omega_Nm^2.
	\end{split}
\end{equation}
	This implies in particular that $ \G_{s_{\bar{n}}}^1(u_n) \leq \vert M \vert +6m^2\frac{N\omega_N}{s_n}$, and we obtain the claim using Proposition \ref{proplsc+com+sfix}.  
Moreover, by \eqref{09112023pom1} and Fubini's theorem we get
$$
+\infty > \liminf_{n\to+\infty} \mathcal{J}_{s_{\bar n}}(u_n)=\liminf_{n\to+\infty} \int_{B(0,1)}\int_{\R^N}\frac{|\nabla u_n(x)\cdot h|^2-|u_n(x+h)-u_n(x)|^2}{|h|^{N+2s_{\bar n}}}\de x \de h.
$$
By weak $H^1$ convergence of $u_n$ to $u$ (thus strong in $L^2$) and the (sequentially) weak lower semicontinuity of the Sobolev seminorm we further infer that
$$
\int_{B(0,1)}\int_{\R^N}\frac{|\nabla u(x)\cdot h|^2-|u(x+h)-u(x)|^2}{|h|^{N+2s_{\bar n}}}\de x \de h < +\infty.
$$
Since this holds for every $\bar n\in\N$, by \eqref{eq:plim1} we conclude that $u \in \mathfrak{D}(\G^1)$ and the result is proved.
\end{proof}

\subsubsection{Proof of the weak liminf inequality}
We now turn to the proof of the weak liminf inequality in Theorem \ref{thm:main}(ii).
Before proving the desired result, we show it for the family $\{\mathcal{J}_s\}_{s \in (0,1)}$.

\begin{lemma}\label{lemma310102023pom}
Let $\mathcal{J}_s$ be defined as in \eqref{31102023pom1}.
For every $s_n \nearrow 1$ and for all $u_n \weak u$ in the weak $L^2$ topology such that
\begin{equation}\label{eq:bound-J}
\mathcal{J}_{s_n}(u_n) \leq M,
\end{equation}
for some $M>0$ independent of $n$, there holds
		\begin{equation}\label{31102023pom3}
				\liminf_{n \rightarrow +\infty} \mathcal{J}_{s_n}(u_n) \geq \int_{\R^N} \int_{B(0,1)} \frac{\int_{0}^{1} \vert \nabla u(th+x) \cdot h \vert^2 \de t-\vert u(x+h)-u(x) \vert^2}{\vert h \vert^{N+2}} \de h \de x .
		\end{equation}
	\end{lemma}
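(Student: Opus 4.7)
The plan is to combine the monotonicity of $s \mapsto \mathcal{J}_s$ from Lemma \ref{monotonicitylemma} with the weak $L^2$ lower semicontinuity of $\mathcal{J}_{\bar s}$ at a fixed $\bar s \in (0,1)$, thereby bounding the liminf from below by $\mathcal{J}_{\bar s}(u)$ for every such $\bar s$, and then to send $\bar s \to 1^-$ via monotone convergence. Concretely, as soon as $s_n \geq \bar s$, Lemma \ref{monotonicitylemma} yields $\mathcal{J}_{s_n}(u_n) \geq \mathcal{J}_{\bar s}(u_n)$; hence it suffices to prove $\liminf_n \mathcal{J}_{\bar s}(u_n) \geq \mathcal{J}_{\bar s}(u)$ at each fixed $\bar s$ and then pass to the limit in $\bar s$.

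The first substantive step is to upgrade the weak $L^2$ convergence to weak $H^1$ convergence (and hence strong $L^2$ convergence). Rearranging the identity of Lemma \ref{lem:Gs-dec} gives the decomposition
\[
\mathcal{J}_{\bar s}(v) = \G^1_{\bar s}(v) + \int_{\R^N}\int_{B(0,1)^c}\frac{|v(x+h)-v(x)|^2}{|h|^{N+2\bar s}}\de h\de x,
\]
whose tail term is bounded by $2(N\omega_N/\bar s)\|v\|_{L^2(\Omega)}^2$ thanks to \eqref{01112023matt1}. Since $u_n \weak u$ in $L^2$ provides a uniform $L^2$-bound and $\mathcal{J}_{\bar s}(u_n)\leq M$ (by monotonicity), the decomposition forces $\G^1_{\bar s}(u_n)$ to be uniformly bounded from above; Proposition \ref{proplsc+com+sfix} then supplies a uniform bound on $\|\nabla u_n\|_{L^2(\Omega)}$. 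Up to subsequences, $u_n \weak u$ in $H^1_0(\Omega)$, and by Rellich--Kondrachov (all supports lie in $\Omega$) also $u_n \to u$ strongly in $L^2(\R^N)$. Using again the same decomposition, $\G^1_{\bar s}$ is weakly $L^2$-lsc by Proposition \ref{proplsc+com+sfix}, while the tail term is strongly $L^2$-continuous via the polarization identity $|v(x+h)-v(x)|^2 - |u(x+h)-u(x)|^2 = \bigl((v-u)(x+h)-(v-u)(x)\bigr)\bigl((v+u)(x+h)-(v+u)(x)\bigr)$, Cauchy--Schwarz with weight $|h|^{-N-2\bar s}$, and \eqref{01112023matt1}. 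Together these yield $\liminf_n \mathcal{J}_{\bar s}(u_n) \geq \mathcal{J}_{\bar s}(u)$.

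Finally, since the integrand of $\mathcal{J}_{\bar s}(u)$ is pointwise nonnegative (Remark \ref{rmk:alt}) and the weight $|h|^{-N-2\bar s}$ is monotone increasing in $\bar s$ on $B(0,1)$, monotone convergence gives
\[
\lim_{\bar s \to 1^-}\mathcal{J}_{\bar s}(u) = \int_{\R^N}\int_{B(0,1)}\frac{\int_0^1 |\nabla u(x+th)\cdot h|^2 \de t - |u(x+h)-u(x)|^2}{|h|^{N+2}}\de h\de x,
\]
which is exactly the right-hand side of \eqref{31102023pom3}. The only genuinely nontrivial ingredient is the $H^1$-compactness in the second step: the bound $\mathcal{J}_{s_n}(u_n)\leq M$ does not directly yield a uniform gradient estimate, and it is precisely the monotonicity trick that lets one freeze $s=\bar s$ and invoke the fixed-$s$ compactness of Proposition \ref{proplsc+com+sfix}.
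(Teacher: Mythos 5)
Your proof is correct and follows essentially the same route as the paper's: freeze $\bar s$, use the monotonicity of $s\mapsto\mathcal{J}_s$ to reduce to the fixed-$\bar s$ level, split $\mathcal{J}_{\bar s}$ into $\G^1_{\bar s}$ plus the tail term via Lemma \ref{lem:Gs-dec}, invoke Proposition \ref{proplsc+com+sfix} for the $H^1$-bound and the lower semicontinuity, handle the tail by strong $L^2$-continuity, and finally send $\bar s\to1^-$ by monotone convergence. The only differences are cosmetic (you spell out the polarization argument for the tail's continuity, which the paper merely asserts).
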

	\begin{proof}
		Fix $ \bar{s} <1$.
		By the monotonicity property provided by Lemma \ref{monotonicitylemma}, \eqref{eq:bound-J} and \eqref{01112023matt1} we get
$$
\G_{\bar s}^1(u_n) \leq C.
$$
		Thus Proposition \ref{proplsc+com+sfix} yields that $ u_n \rightarrow u $ in the weak $H^1$ topology. Therefore we have
		\begin{equation}
			\begin{split}
				\liminf_{n \rightarrow + \infty} \mathcal{J}_{s_n}(u_n) &\geq \liminf_{n \rightarrow +\infty} \mathcal{J}_{\bar s} (u_n) \\
				&\geq \liminf_{n \rightarrow + \infty} \G_{\bar s}^1(u_n)+\lim_{n \rightarrow + \infty} \int_{\mathbb{R}^{N}} \int_{B(0,1)^{c}} \frac{\vert u_n(x+h-)u_n(x) \vert^2}{\vert h \vert^{N+2\bar{s}}}\de h\de x\\
				&\geq  \G_{\bar s}^1(u)+ \int_{\mathbb{R}^{N}} \int_{B(0,1)^{c}} \frac{\vert u(x+h)-u(x) \vert^2}{\vert h \vert^{N+2\bar{s}}}\de h\de x = \mathcal{J}_{\bar s}(u)
			\end{split}
		\end{equation}
		where we have used for the last inequality the lower semicontinuity proved in Proposition \ref{proplsc+com+sfix}, and the continuity of the map 
		$$ u \mapsto  \int_{\R^N} \int_{B(0,1)^c} \frac{\vert u(x+h)-u(x) \vert^2}{\vert h \vert^{N+2 \bar{s}}}  \de h\de x  $$
		with respect to the strong $L^2$ topology. 
Eventually, taking the limit as $ \bar{s} \rightarrow 1^-$, the result follows by the pointwise convergence of $\mathcal{J}_{\bar{s}}$ \eqref{eq:plim1}.
	\end{proof}

	We have now everything in place to give the proof of Theorem \ref{thm:main}(ii), which then conclude completely the proof of Theorem \ref{thm:main}.

	\begin{proof}[Proof of Theorem \ref{thm:main}(ii)]
		Let $ s_n \nearrow 1$ and let $ u_n \weak u $ in the weak $L^2$ topology. We will prove that 
		\begin{equation*}
			\liminf_{n \rightarrow + \infty}  \G_{s_n}^1(u_n) \geq \G^1(u).
		\end{equation*}
	We can assume that $\liminf_{n \rightarrow + \infty}  \G_{s_n}^1(u_n) < +\infty$, otherwise there is nothing to prove.
By Proposition \ref{compcompcompthm} we have $u_n \rightarrow u$ in the weak $H^1(\Omega)$ topology.
By definition of $ \mathcal{J}_{s_n}$, see formula \eqref{31102023pom1}, we have
		\begin{equation}\label{18022024matt3}
			\liminf_{n \rightarrow + \infty}  \G_{s_n}^1(u_n) \geq \liminf_{n \rightarrow + \infty} \mathcal{J}_{s_n}(u_n)+ \liminf_{n \rightarrow + \infty} -\int_{\mathbb{R}^{N}} \int_{B(0,1)^{c}} \frac{\vert u_n(x+h)-u_n(x) \vert^2}{\vert h \vert^{N+2s_n}}\de h\de x.
		\end{equation}
		Since $u_n \rightarrow u$ in $L^2(\Omega)$, we can assume that $u_n\to u$ almost everywhere, up to a subsequence, and there exists $f \in L^2(\Omega)$ such that $\vert u_n \vert \leq f$.
Hence by Lebesgue's dominated convergence theorem we have
$$
\lim_{n\to+\infty} \int_{\mathbb{R}^{N}} \int_{B(0,1)^{c}} \frac{\vert u_n(x+h)-u_n(x) \vert^2}{\vert h \vert^{N+2s_n}}\de h\de x = \int_{\mathbb{R}^{N}} \int_{B(0,1)^{c}} \frac{\vert u(x+h)-u(x) \vert^2}{\vert h \vert^{N+2}}\de h\de x.
$$
		By Lemma \ref{lemma310102023pom}, from \eqref{18022024matt3} and the formula above we infer that
		\begin{equation*}
			\begin{split}
				\liminf_{n \rightarrow + \infty} \G^1_{s_n}(u) &\geq  -\frac{2}{N \omega_N }\int_{\R^N} \vert u(x) \vert^2 \, \de x+2 \int_{\R^N} \int_{B(0,1)^c} \frac{u(x+h)u(x)}{\vert h \vert^{N+2}} \de h\de x \\
				&  \qquad + \int_{\R^N} \int_{B(0,1)}\frac{\int_{0}^{1} \vert \nabla u(x+th) \cdot h \vert^2 \de t-\vert u(x+h)-u(x) \vert^2}{\vert h \vert^{N+2}}\de h \de x.
			\end{split}
		\end{equation*}	
		This is the desired result, recalling \eqref{eq:G1-Glim}. 
	\end{proof}

\section{Stability of $\G_s^1$ gradient flows under $\Gamma$-convergence}\label{convgradflow}
In this section we investigate the stability of the $\G_s^1$ gradient flows as $s\to1^-$.
After computing the first variation of $\G_s^1$ for all $s\in (0,1)$ and of $\G^1$, we will recall and apply a general result of convergence of gradient flows of $\Gamma$-converging functionals that are uniformly $\lambda$\emph{-convex} and $\lambda$ \emph{-positive} (see \cite[Definition 3.1]{crismale2023variational}).
We will then prove that $\{\G_s^1\}_{s \in (0,1)}$ complies with these two latter properties, and hence we will obtain the desired stability result.

Here and below $\langle\cdot,\cdot\rangle$ denotes the standard scalar product in $L^2(\Omega)$. In this section we denote by $v_t$ the partial time derivative of a function $v$.

\subsection{First variation of $ \G_s^1$ and $\G^1$} 
For the convenience of the reader, we recall the definition and the main properties of the $s$-fractional laplacian in the lines below, as it is part of the first variation of our rate energy $\G^1_s$.

For every $\phi \in C^\infty_c(\R^N)$ we define the $s$-fractional Laplacian of $ \phi $ as
\begin{equation}
(- \Delta)^{s}\phi(x):=C(N,s)\frac{1}{2}\int_{\R^N} \frac{2 \phi(x) -\phi(x+h)-\phi(x-h)}{\vert h \vert^{N+2s}} \de h \quad \text{for every } x \in \R^N,
\end{equation}
where $C(N,s)$ is the multiplicative factor
$$
C(N,s):=\Big(\int_{\R^N}\frac{1-\cos(h_1)}{|h|^{N+2s}}\de h\Big)^{-1},
$$
see \emph{e.g.}\ \cite[formulas (3.1) and (3.2)]{di2012hitchhikers}.
It is well-known (cf.\ \cite[Corollary 4.2]{di2012hitchhikers}) that $C(N,s)$ behaves like $(1-s)$ as $s\to 1^{-}$, in the sense that $\frac{C(N,s)}{1-s}\to \frac{4}{\omega_N}$.
In \cite[Lemma 3.2]{di2012hitchhikers} it is proved that for all $\phi \in C_c^{\infty}(\R^N)$
	\begin{equation}
		(-\Delta)^s \phi \in L^{\infty}(\R^N) \text{ and } (-\Delta)^s \phi (x) = C(N,s)\lim_{r \rightarrow 0^+} \int_{\R^N \setminus B(0,r)} \frac{\phi(x)-\phi(x+h)}{\vert h \vert^{N+2s}} \de h.
	\end{equation}
	It is well known that the $s$-fractional laplacian is related to the squared Gagliardo $s$-seminorm; \emph{i.e.}, the $s$-fractional laplacian is the first variation of the functional 
	$$u \mapsto \frac{C(N,s)}{4(1-s)}G_s(u)=\frac{C(N,s)}{4} \int_{\R^N} \int_{\R^N} \frac{ \vert u(y)-u(x)\vert^2}{\vert y-x \vert^{N+2s}}\de y\de x,$$
with $G_s$ defined as in \eqref{eq:gagliardo}, for a proof of this see \cite[Proposiotion 4.1]{crismale2023variational}. As customary, we extend the fractional laplacian to every $u \in L^2(\Omega)$ such that $G_s(u)< \infty$ (namely $u\in H_0^s(\Omega)$ ) via duality, by defining the $s$-fractional laplacian of $u$ as follows
	\begin{equation*}
		\langle(-\Delta)^su,\phi\rangle:= \langle u,(-\Delta)^s\phi \rangle \quad \text{for all } \phi \in C^{\infty}_{c}(\Omega).
	\end{equation*}
	For all $ u \in H_0^1(\Omega)$ and for all $\phi \in C_c^{\infty}(\Omega) $ we have that the first variation of $u \mapsto \G_s^1(u)$ is 
	\begin{equation}\label{06112023matt1}
		\lim_{t \rightarrow 0}  \frac{ \G_s^1(u+t \phi)- \G_s^1(u)}{t}=\Big\langle u,\frac{\omega_N(-\Delta)-\frac{4(1-s)}{C(N,s)}(-\Delta)^s}{1-s}\phi \Big\rangle .
	\end{equation}
We now define a new operator that we will prove to be the first variation of the $\Gamma$-limit $\G^1$.
Let $\phi \in C^\infty_c(\R^N)$ we define 
	\begin{equation}\label{1-fractionallaplace}
		\begin{split}
			\mathfrak{L} \phi(x):=& -2N \omega_N \phi(x)+ 4\int_{B(0,1)^c} \frac{\phi(x+h)}{\vert h \vert^{N+2}}\de h \\
			& \qquad -2\int_{B(0,1)} \frac{2\phi(x)-\phi(x-h)-\phi(x+h)+ h \cdot\nabla^2 \phi(x)h }{\vert h \vert^{N+2}} \de h.
		\end{split}
	\end{equation}
This new operator is well-defined as results by the following computations.

\begin{lemma}
	Let $ \phi \in C_c^3(\Omega)$ then $ \mathfrak{L} \phi \in C(\Omega)$.
\end{lemma}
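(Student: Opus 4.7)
The plan is to decompose $\mathfrak{L}\phi$ into three pieces and verify continuity of each. The constant piece $-2N\omega_N\phi(x)$ is trivially in $C(\Omega)$ since $\phi\in C_c^3(\Omega)\subset C(\Omega)$.

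For the tail integral $\int_{B(0,1)^c} \phi(x+h)/|h|^{N+2}\,\de h$, I would bound the integrand by $\|\phi\|_\infty|h|^{-N-2}$, which is integrable on $B(0,1)^c$ (the tail computes to $N\omega_N/2$). Together with the continuity of $\phi$, Lebesgue's dominated convergence theorem yields continuity of this term in $x$.

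The core of the proof is the singular integral on $B(0,1)$. The key tool is a symmetric second-order Taylor expansion of $\phi$ at $x$: for every $h\in\R^N$,
$$\phi(x\pm h) = \phi(x) \pm \nabla\phi(x)\cdot h + \tfrac{1}{2}\,h\cdot\nabla^2\phi(x)h \pm R_\pm(x,h), \quad |R_\pm(x,h)|\leq \tfrac{1}{6}\|\nabla^3\phi\|_\infty|h|^3,$$
where the uniform remainder bound uses the $C^3$ regularity of $\phi$. Summing the two expansions, the first-order terms cancel by symmetry, while the quadratic terms add up to exactly $h\cdot\nabla^2\phi(x)h$. Consequently, the numerator $2\phi(x)-\phi(x-h)-\phi(x+h)+h\cdot\nabla^2\phi(x)h$ reduces to $R_+(x,h)-R_-(x,h)$, so it is $O(|h|^3)$ uniformly in $x$. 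Dividing by $|h|^{N+2}$, the integrand is then dominated by $C|h|^{1-N}$, which is integrable on $B(0,1)$ since $\int_0^1 r^{1-N}\cdot r^{N-1}\,\de r=1$.

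Continuity in $x$ of this last piece then follows by a final application of the dominated convergence theorem: the majorant $C|h|^{1-N}$ is uniform in $x$ (since it depends only on the global bound $\|\nabla^3\phi\|_\infty$), and pointwise convergence of the integrand along any sequence $x_n\to x$ is guaranteed by the continuity of $\phi$ and $\nabla^2\phi$. The only genuinely delicate step is the symmetric Taylor cancellation; once one recognizes that both the odd-order terms and the Hessian terms cancel against the explicit correction $h\cdot\nabla^2\phi(x)h$, a clean $O(|h|^3)$ remainder emerges, which is precisely what is needed to balance the $|h|^{-N-2}$ singularity.
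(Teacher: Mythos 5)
Your proof is correct and follows essentially the same route as the paper: trivial continuity of the zeroth-order term, a uniform integrable bound on the tail integral, a symmetric second-order Taylor expansion giving an $O(|h|^3)$ numerator (hence an integrable $|h|^{1-N}$ majorant on $B(0,1)$), and dominated convergence for continuity in $x$. (Only cosmetic quibbles: with your sign convention the remainder is $R_--R_+$ rather than $R_+-R_-$, and the radial integral $\int_0^1 r^{1-N}r^{N-1}\,\de r$ should carry the surface factor $N\omega_N$; neither affects the argument.)
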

\begin{proof}
	We first notice that
	\begin{equation}\label{02112023pom2}
	\sup_{x \in \Omega}\Big| \int_{B(0,1)^c} \frac{\phi(x+h)}{\vert h \vert^{N+2}}\de h\Big| \leq \| \phi \|_{L^{\infty}} \int_{B(0,1)^c} \frac{1}{\vert h \vert^{N+2}} \de h = \frac{N \omega_N}{2} \| \phi \|_{L^{\infty}}.
	\end{equation}
	Using the Taylor expansion of the function $\phi$ we have that for all $ (x,h) \in \Omega_1 \times B(0,1)$
	\begin{equation}\label{02112023pom1}
	\big\vert 2\phi(x)-\phi(x-h)-\phi(x+h)+ h \cdot\nabla^2 \phi(x)h \big\vert \leq R\vert h \vert^3 
	\end{equation}
	where  $\Omega_1:= \{z \in \R^N\, : \, \mathrm{dist}(z, \Omega ) \leq 1\}$ and
	$$
	R:= C(N)\sup \Big\{ \Big\vert \frac{\partial^3 \phi }{\partial x_i \, \partial x_j \partial x_k}(x) \Big\vert \, : x \in \Omega_1, \, i,\,j,\,k = 1, \dots,N \Big\}.
	$$ 
	Therefore by formula \eqref{02112023pom1} we obtain
	\begin{equation}\label{02112023pom3}
	\begin{split}
	\sup_{x \in \Omega}\Big| \int_{B(0,1) } \frac{2\phi(x)-\phi(x-h)-\phi(x+h) + h \cdot\nabla^2 \phi(x)h }{\vert h \vert^{N+2}} & \de h \Big| \\
	& \hspace{-5ex} \leq R \int_{B(0,1)} \frac{1}{\vert h \vert^{N-1}} \de h = R N \omega_N.
	\end{split}
	\end{equation}
	By \eqref{02112023pom2} and \eqref{02112023pom3} we obtain $\mathfrak{L} \phi \in L^{\infty}(\Omega)$.
	The continuity of $ x \mapsto \mathfrak{L} \phi(x)$ is a consequence of Lebesgue's dominated convergence theorem, which can be applied by exploiting \eqref{02112023pom3} and yields sequential continuity.
	\end{proof}

	As for the $s$-fractional laplacian, for every $u \in\mathfrak{D}(\G^1)$ we define $\mathfrak{L}u$ by duality as follows
	\begin{equation}
		\langle\mathfrak{L} u,\phi\rangle:= \langle u,\mathfrak{L} \phi \rangle \quad \text{for all } \phi \in C^{\infty}_{c}(\Omega).
	\end{equation}
	The operator $\mathfrak{L}$ is the first variation of the  functional $\G^1$, as shown below.
	\begin{lemma}\label{varprimaG^1}
		For every $ u \in \mathfrak{D}(\G^1)$ and for every $ \phi \in C_{c}^{\infty}(\Omega)$ we have
		\begin{equation}\label{firstvariationG^1}
			\lim_{t \rightarrow 0} \frac{ \G^1(u+t\phi)-\G^1(u)}{t}= \langle \mathfrak{L}  u, \,  \phi \rangle.
		\end{equation}
	\end{lemma}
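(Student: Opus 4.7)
The approach is to exploit that $\G^1$ is a quadratic form on $\D(\G^1)$, so its first variation reduces to evaluating the polarized symmetric bilinear form against $\phi$. Recall from Remark \ref{rmk:alt} the equivalent representation that makes the $B(0,1)$-piece of the integrand the pointwise nonnegative quadratic form $Q_{x,h}(v) := \int_0^1 |\nabla v(x+th) \cdot h|^2 \de t - |v(x+h)-v(x)|^2$, whose polarization $Q_{x,h}(v,w)$ satisfies $|Q_{x,h}(u,\phi)| \le Q_{x,h}(u,u)^{1/2} Q_{x,h}(\phi,\phi)^{1/2}$. Since $\phi \in \test(\Omega) \subset \D(\G^1)$ by Lemma \ref{lemma03112023pom1}, Cauchy--Schwarz applied to $|h|^{-N-2}\chi_{B(0,1)}Q_{x,h}(u,\phi)$, together with the $L^2$-bound on the absolutely convergent $B(0,1)^c$-piece, yields a well-defined symmetric bilinear form $B$ on $\D(\G^1)\times\D(\G^1)$ with $\G^1(u+t\phi) = \G^1(u) + 2tB(u,\phi) + t^2\G^1(\phi)$. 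Dividing by $t$ and letting $t\to 0$ immediately gives $\lim_{t\to 0} (\G^1(u+t\phi) - \G^1(u))/t = 2B(u,\phi)$, and it remains to show $2B(u,\phi) = \langle u, \mathfrak{L}\phi\rangle$, which equals $\langle \mathfrak{L}u, \phi\rangle$ by the duality definition of $\mathfrak{L}u$.

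Decompose $B = B^{\mathrm{out}} + B^{\mathrm{in}}$ along $B(0,1)^c$ vs.\ $B(0,1)$. The outer piece is a straightforward Fubini computation (the kernel $|h|^{-N-2}$ is integrable on $B(0,1)^c$ and $u,\phi\in L^2$): expanding the product $(u(x+h)-u(x))(\phi(x+h)-\phi(x))$, changing variable $x\mapsto x-h$ in the mixed terms, and using the symmetry $h\mapsto -h$ together with $\int_{B(0,1)^c}|h|^{-N-2}\de h = N\omega_N/2$, I obtain
$$
2B^{\mathrm{out}}(u,\phi) = \int_{\R^N} u(x)\Big(\! -2N\omega_N \phi(x) + 4\int_{B(0,1)^c}\frac{\phi(x+h)}{|h|^{N+2}}\de h\Big) \de x,
$$
which matches the first two contributions to $\mathfrak{L}\phi$ in \eqref{1-fractionallaplace}.

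For $B^{\mathrm{in}}$, the gradient and difference-quotient pieces are separately non-integrable near $h=0$ and their cancellation must be exploited via a principal-value cutoff. Working on the annulus $A_\epsilon := B(0,1)\setminus B(0,\epsilon)$, where Fubini is legal, translation invariance lets me replace the integrated gradient term by its pointwise counterpart, after which two integrations by parts (valid since $u\in H^1_0(\Omega)$ and $\phi\in\test(\Omega)$) give $\int_{\R^N}(\nabla u \cdot h)(\nabla \phi\cdot h)\de x = -\int_{\R^N} u(x)\,(h\cdot\nabla^2\phi(x) h)\,\de x$. The diff-quotient term is handled exactly as for $B^{\mathrm{out}}$, producing
$$
B^{\mathrm{in},\epsilon}(u,\phi) = \int_{\R^N} u(x)\int_{A_\epsilon} \frac{\phi(x+h) + \phi(x-h) - 2\phi(x) - h\cdot\nabla^2\phi(x) h}{|h|^{N+2}}\,\de h \,\de x.
$$
The Taylor bound $|\phi(x+h) + \phi(x-h) - 2\phi(x) - h\cdot\nabla^2\phi(x) h| \le C|h|^4$ (available since $\phi \in \test(\Omega)$) renders the inner integrand $O(|h|^{2-N})$, so dominated convergence identifies the right-hand side in the limit $\epsilon \to 0^+$ with exactly the contribution of the third term in \eqref{1-fractionallaplace} to $\tfrac12\langle u, \mathfrak{L}\phi\rangle$. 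On the left, the pointwise Cauchy--Schwarz bound on $Q_{x,h}$ furnishes an integrable dominant (using $u,\phi\in\D(\G^1)$), so $B^{\mathrm{in},\epsilon}(u,\phi) \to B^{\mathrm{in}}(u,\phi)$ by dominated convergence. Summing with the $B^{\mathrm{out}}$ calculation yields $2B(u,\phi) = \langle u, \mathfrak{L}\phi\rangle = \langle \mathfrak{L}u, \phi\rangle$. The main obstacle is precisely this singular cancellation inside $B(0,1)$: coordinating Fubini, the two integrations by parts, and the limit $\epsilon\to 0^+$ around the combined nonnegative form $Q_{x,h}$ is where the hypothesis $u\in\D(\G^1)$ is essentially used.
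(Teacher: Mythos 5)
Your proposal is correct and follows essentially the same route as the paper: both exploit the quadratic structure of $\G^1$, reduce the first variation to a bilinear form evaluated against $\phi$, treat the $B(0,1)^c$ part by direct expansion and the singular $B(0,1)$ part via an annular cutoff, change of variables, integration by parts to produce the $h\cdot\nabla^2\phi(x)\,h$ term, and a Taylor bound to pass to the limit. The only cosmetic difference is that you control the off-diagonal term through the pointwise Cauchy--Schwarz inequality for the nonnegative form $Q_{x,h}$, whereas the paper deduces its finiteness algebraically from the finiteness of the diagonal terms of its (non-symmetric) bilinear form $\mathcal{C}_r$.
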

	\begin{proof}
	To shorten the computations, we define the following functionals 
	\begin{align*}
		\mathcal{A} \colon L^2(\Omega) \rightarrow (-\infty,+\infty), & \quad \mathcal{A}(u):= -N \omega_N \int_{\R^N} \vert u(x) \vert^2 \, \de x, \\
		\mathcal{B} \colon L^2(\Omega) \rightarrow (-\infty,+\infty) ,& \quad \mathcal{B}(u):= 2 \int_{\R^N}\int_{B(0,1)^c}  \frac{u(x+h)u(x)}{\vert h \vert^{N+2}}\de h\de x.
	\end{align*}
	For all $r \in (0,1)$ we also define the bilinear functionals $ \mathcal{C}_r\colon L^2(\Omega)\times L^2(\Omega) \rightarrow (-\infty,+\infty]$ 
	\begin{equation*}
		\mathcal{C}_r(u,v):=- \int_{S_r}\frac{( u(h+x)-u(x)-\nabla u(x+th)\cdot h )(v(h+x)-v(x)+\nabla v(x+th)\cdot h)}{\vert h \vert^{N+2}}  \de x\de h \de t 
	\end{equation*}
	where $S_r:= \R^N \times \big(B(0,1) \setminus B(0,r)\big)\times(0,1)$.
	By \eqref{eq:G1-Glim}, for all $u \in \mathfrak{D}(\G^1)$ we have that 
	\begin{equation}\label{03112023pom1}
		\G^1(u)= \mathcal{A}(u)+\mathcal{B}(u)+ \lim_{r \rightarrow 0^+} \mathcal{C}_r(u,u).
	\end{equation}
	By formula \eqref{03112023pom1} we have that
	\begin{equation}
	\begin{split}
		\lim_{t \rightarrow 0} \frac{ \G^1(u+t\phi)-\G^1(u)}{t} &= \lim_{t \rightarrow 0} \frac{ \mathcal{A}(u+t\phi)-\mathcal{A}(u)}{t}+\lim_{t \rightarrow 0} \frac{ \mathcal{B}(u+t\phi)-\mathcal{B}(u)}{t}\\
		& \qquad +\lim_{t \rightarrow 0} \lim_{r \rightarrow 0^+}\frac{\mathcal{C}_r(u+t \phi, u+t \phi)-\mathcal{C}_r(u, u)}{t}.
	\end{split}
	\end{equation}
	By standard algebraic computations, we obtain
	\begin{equation*}
		\mathcal{A}(u+t\phi)= \mathcal{A}(u)+t^2 \mathcal{A}(\phi)-2tN \omega_N \int_{\R^N} u(x)\phi(x) \de x
	\end{equation*}
	and
	\begin{equation*}
		\mathcal{B}(u+t \phi)= \mathcal{B}(u)+ t^2 \mathcal{B}(\phi)+4t \int_{\R^N} u(x)\int_{B(0,1)^c} \frac{\phi(x+h) }{\vert h \vert^{N+2}} \de h\de x.
	\end{equation*}
	This yields that
	\begin{equation}\label{02112023sera1}
	\begin{split}
		\lim_{t \rightarrow 0} \frac{ \mathcal{A}(u+t\phi)-\mathcal{A}(u)}{t} &+\lim_{t \rightarrow 0} \frac{ \mathcal{B}(u+t\phi)-\mathcal{B}(u)}{t} \\
		&= \int_{\R^N}u(x)\Big(-2N \omega_N\phi(x)+ 4\int_{B(0,1)^c} \frac{\phi(x+h) }{\vert h \vert^{N+2}} \de h\Big)\de x.
	\end{split}
	\end{equation}
	From the bilinearity of functional, we get that 
	\begin{equation}\label{02112023sera2}
		\mathcal{C}_r(u+t \phi,u+t \phi)= \mathcal{C}_r(u,u)+ t \mathcal{C}_r(u,\phi)+ t \mathcal{C}_r(\phi,u)+ t^2 \mathcal{C}_r(\phi,\phi).
	\end{equation}
	Notice that, by \eqref{eq:domaindef} and Lemma \ref{lemma03112023pom1}, for all $r>0$ there holds $0\leq \mathcal{C}_r(u,u)< +\infty$, $0\le\mathcal{C}_r(\phi,\phi)< +\infty$ and $0\le\mathcal{C}_r(u+t \phi,u+t \phi) < +\infty$.
	Thus, by formula \eqref{02112023sera2} above, we infer that $\mathcal{C}_r(u,\phi)+\mathcal{C}_r(\phi,u)\neq \infty$ for all$r>0$.
	Therefore by \eqref{02112023sera2} we obtain
		\begin{equation}
			\lim_{t \rightarrow 0} \lim_{r \rightarrow 0^+} \frac{\mathcal{C}_r(u+t \phi, u+t \phi)-\mathcal{C}_r(u, u)}{t}= \lim_{r \rightarrow 0^+}\mathcal{C}_r(u,\phi)+  \mathcal{C}_r(\phi,u).
	\end{equation}
	We now claim that 
	\begin{equation}\label{03112023matt9}
	\begin{split}
		\lim_{r \rightarrow 0^+}&\mathcal{C}_r(u,\phi)+  \mathcal{C}_r(\phi,u)\\
		&=	 - 2 \int_{\R^N} u(x)\,\lim_{r \rightarrow 0^+} \int_{B(0,1) \setminus B(0,r) } \frac{2\phi(x)-\phi(x-h)-\phi(x+h)+ h \cdot\nabla^2 \phi(x)h }{\vert h \vert^{N+2}} \de h \de x;
	\end{split}
	\end{equation}
	we show it by expanding the product at the numerator of $\mathcal{C}_r(u,\varphi)$ and $\mathcal{C}_r(\varphi,u)$ (cf.\ definition of $\mathcal{C}_r)$ and rewriting all the terms in a convenient way.
	Regarding the first term of $\mathcal{C}_r(u,\varphi)$, by a change of variable we obtain that
	\begin{equation}\label{03112023matt2}
	\begin{split}
		\int_{\R^N} & \int_{B(0,1)\setminus B(0,r)} \int_{0}^1\frac{ u(h+x) (\phi(x+h)-\phi(x)+\nabla \phi(x+th)\cdot h)}{\vert h \vert^{N+2}} \de h \de t \de x \\
		& \quad = \int_{\R^N} u(x)\int_{B(0,1)\setminus B(0,r)}\int_{0}^1\frac{\phi(x)-\phi(x-h)+\nabla \phi(x+(t-1)h)\cdot h}{\vert h \vert^{N+2}} \de h \de t  \de x.
	\end{split}
	\end{equation}
	The second terms simply reads
	\begin{equation}\label{03112023matt3}
	\int_{\R^N} u(x)\int_{B(0,1)\setminus B(0,r)} \int_0^1\frac{-\phi(x+h)+\phi(x)-\nabla \phi(x+th)\cdot h}{\vert h \vert^{N+2}} \de h \de t \de x
	\end{equation}
	After an integration by parts and a change of variables, the third term reads
	\begin{equation}\label{03112023matt4}
	\begin{split}
		&\int_{\R^N}\int_{B(0,1)\setminus B(0,r)}\int_0^1\frac{-\nabla u(x+th)\cdot h (\phi(x+h)-\phi(x)+\nabla \phi(x+th)\cdot h)}{\vert h \vert^{N+2}} \de h \de t \de x \\
		& \quad =\int_{\R^N} u(x+th)\int_{B(0,1)\setminus B(0,r)} \int_0^1 \frac{\nabla \phi(x+h)\cdot h- \nabla \phi(x)\cdot h+ h \cdot \nabla^2 \phi(x+th) h}{\vert h \vert^{N+2}} \de h \de t  \de x \\
		& \quad = \int_{\R^N} u(x) \int_{B(0,1) \setminus B(0,r)} \int_0^1 \frac{\nabla \phi(x+(1-t)h)\cdot h- \nabla \phi(x -th)\cdot h+ h \cdot \nabla^2 \phi(x) h}{\vert h \vert^{N+2}} \de h \de t \de x .
	\end{split}
	\end{equation}
	Hence, by \eqref{03112023matt2}, \eqref{03112023matt3} and \eqref{03112023matt4} we get
	\begin{equation}\label{eq:c1}
	\begin{split}
		&\mathcal{C}_r(u,\varphi) = \int_{\R^N} u(x) \int_{B(0,1) \setminus B(0,r)} \int_0^1\frac{2\varphi(x)-\varphi(x-h)-\varphi(x+h)+h\cdot \nabla^2\varphi(x) h}{|h|^{N+2}}\de h\de t\de x \\
		& \quad +\int_{\R^N} u(x) \int_{B(0,1) \setminus B(0,r)} \int_0^1\frac{2\nabla\varphi(x+(1-t)h)\cdot h-\nabla\varphi(x+th)\cdot h-\nabla\varphi(x-th)\cdot h}{|h|^{N+2}}\de h\de t\de x.
	\end{split}
	\end{equation}
	Analogous calculations also give that
	\begin{equation}\label{eq:c2}
	\begin{split}
		&\mathcal{C}_r(\varphi,u) = \int_{\R^N} u(x) \int_{B(0,1) \setminus B(0,r)} \int_0^1\frac{2\varphi(x)-\varphi(x-h)-\varphi(x+h)+h\cdot \nabla^2\varphi(x) h}{|h|^{N+2}}\de h\de t\de x \\
		& \quad -\int_{\R^N} u(x) \int_{B(0,1) \setminus B(0,r)} \int_0^1\frac{2\nabla\varphi(x+(1-t)h)\cdot h-\nabla\varphi(x+th)\cdot h-\nabla\varphi(x-th)\cdot h}{|h|^{N+2}}\de h\de t\de x.
	\end{split}
	\end{equation}
	Gathering \eqref{eq:c1} and \eqref{eq:c2} formula \eqref{03112023matt9} is proved.
		Eventually, by \eqref{1-fractionallaplace}, \eqref{02112023sera1}, \eqref{03112023matt9} we have \eqref{firstvariationG^1}.
	\end{proof}

\subsubsection{Fourier symbols of $\mathfrak{L}$}
For completeness, we conclude the discussion about the new operator $\mathfrak{L}$ by computing its Fourier symbols.
This comes as an immediate consequence of
the next lemma in which we compute the Fourier transform of $\mathfrak{L} \phi$ for any $\phi \in C_c^{\infty}(\R^N)$.
\begin{lemma}\label{lem:symb}
	For every $ \phi \in C_c^\infty(\R^N)$ we have that
	\begin{equation}\label{fouririri}
		\mathcal{F}[ \mathfrak{L} \phi](\xi)= \bigg(-2 N \omega_N + 4 \int_{B(0,1)^c} \frac{\cos(\xi\cdot h) }{\vert h\vert^{N+2}}dh +2m(\xi)\bigg) \hat\phi(\xi)
	\end{equation}
where $ \xi \mapsto m(\xi)$ is the function defined in \eqref{eq:mult}.
\end{lemma}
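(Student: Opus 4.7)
The proof is a direct computation: apply $\mathcal{F}$ to each of the three summands in the definition of $\mathfrak{L}\phi$ and recognize $m(\xi)$ at the end. The only real work is justifying Fubini to swap $\mathcal{F}$ with the $h$-integrals, and this is straightforward for $\phi\in C^\infty_c(\R^N)$.

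First I would handle the pointwise part and the exterior integral. The first summand is trivial, $\mathcal{F}[-2N\omega_N\phi](\xi)=-2N\omega_N\hat\phi(\xi)$. For the second summand, the translation rule $\mathcal{F}[\phi(\cdot+h)](\xi)=e^{ih\cdot\xi}\hat\phi(\xi)$ together with Fubini (legitimate since $|h|^{-N-2}\in L^1(B(0,1)^c)$ and $\phi\in\mathcal{S}$, so the double integral is absolutely convergent in $L^2$) gives
\begin{equation*}
\mathcal{F}\!\left[4\!\int_{B(0,1)^c}\!\!\frac{\phi(\cdot+h)}{|h|^{N+2}}\de h\right]\!(\xi)=4\hat\phi(\xi)\!\int_{B(0,1)^c}\!\!\frac{e^{ih\cdot\xi}}{|h|^{N+2}}\de h=4\hat\phi(\xi)\!\int_{B(0,1)^c}\!\!\frac{\cos(h\cdot\xi)}{|h|^{N+2}}\de h,
\end{equation*}
where the imaginary part drops out by the oddness of $h\mapsto\sin(h\cdot\xi)/|h|^{N+2}$ over the symmetric domain.

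Next I would tackle the third summand. Using the translation rule again and $\mathcal{F}[\partial_i\partial_j\phi](\xi)=-\xi_i\xi_j\hat\phi(\xi)$, I get $\mathcal{F}[h\cdot\nabla^2\phi(\cdot)h](\xi)=-(h\cdot\xi)^2\hat\phi(\xi)$, so for each fixed $h\in B(0,1)$ the numerator transforms to $\bigl(2-2\cos(h\cdot\xi)-(h\cdot\xi)^2\bigr)\hat\phi(\xi)$. To swap $\mathcal{F}$ with $\int_{B(0,1)}$, I invoke the fourth-order Taylor estimate $|2\phi(x)-\phi(x-h)-\phi(x+h)+h\cdot\nabla^2\phi(x)h|\leq C\|\phi\|_{C^4}|h|^4$ (the third-order terms cancel by parity of $\phi(x+h)+\phi(x-h)$), whence the integrand is bounded by $C|h|^{2-N}$, integrable near the origin; Fubini then gives
\begin{equation*}
\mathcal{F}\!\left[-2\!\int_{B(0,1)}\!\!\tfrac{2\phi(\cdot)-\phi(\cdot-h)-\phi(\cdot+h)+h\cdot\nabla^2\phi(\cdot)h}{|h|^{N+2}}\de h\right]\!(\xi)=2\hat\phi(\xi)\!\int_{B(0,1)}\!\!\tfrac{(h\cdot\xi)^2-2+2\cos(h\cdot\xi)}{|h|^{N+2}}\de h,
\end{equation*}
and the right-hand side is $2m(\xi)\hat\phi(\xi)$ by the definition \eqref{eq:mult}.

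Summing the three contributions yields \eqref{fouririri}. There is no real obstacle: the only subtlety is the integrability near $h=0$ in the third term, which is precisely why the term $h\cdot\nabla^2\phi(x)h$ was built into $\mathfrak{L}$ in the first place, and the Taylor expansion handles it immediately.
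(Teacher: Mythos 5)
Your proof is correct and takes essentially the same route as the paper's: apply the translation rule and $\mathcal{F}[h\cdot\nabla^2\phi(\cdot)h](\xi)=-|\xi\cdot h|^2\hat\phi(\xi)$ termwise, discard the sine contributions by symmetry of the domains, and recognize $m(\xi)$ in the third term. The paper's proof is just a terse statement of these facts; you additionally supply the (correct) Fubini justifications, in particular the fourth-order Taylor bound making the $B(0,1)$-integrand $O(|h|^{2-N})$ near the origin.
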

\begin{proof}
	The thesis follows directly by the definition of $\xi \mapsto m(\xi)$ and by the well known properties of the Fourier transform;
	\begin{equation}
	\mathcal{F}[\phi(\cdot+h)](\xi)= e^{-i \xi \cdot h} \hat\phi(\xi),
	\quad
	\mathcal{F}[h \cdot \nabla^2 \phi(\cdot) h](\xi)= - \vert \xi \cdot h \vert^2 \hat\phi(\xi).
	\end{equation}
Eventually, since $\rho\mapsto\sin(\xi\cdot(\rho\sigma))$ is even for every $\xi\in\R^N$ and $\sigma\in\mathbb{S}^{N-1}$, the result is proved.
\end{proof}

We notice that, an alternative proof of the lemma above may come from computing the first variation of Fourier representation of $\G^1$ in $L^2(\R^N;\mathbb{C})$.
Indeed, by the same arguments as in Section \ref{sec:domain-F} we have
$$
\G^1(u)=\int_{\R^N} \Big(m(\xi) -\int_{B(0,1)^c}\frac{2-2\cos(\xi\cdot h)}{|h|^{N+2}}\Big)|\hat u(\xi)|^2 \de \xi,
$$
for every $u\in\D(\G^1)$.
Straightforward computations leads to the same conclusion of Lemma \ref{lem:symb}.

\subsection{Abstract stability results for gradient flows.}
It is well known that, under suitable uniform convexity assumptions, $\Gamma$-convergence commutes with gradient flows, see \cite{brezis1973ope,B14} and also \cite{ambrosio2005gradient, de1980problems,gigli2023giorgi, sandier2004gamma} for a generalization in metric spaces.
For the specific framework of uniform $\lambda$-convex functionals we refer for instance to \cite{nochetto2000posteriori, ortner2005two, ortner2006gradient, crismale2023variational}.

Here, we recall the stability result as appears in \cite[Theorem 3.8]{crismale2023variational}, that best suits our purposes. We start by recalling the notion of $\lambda$-positive and $\lambda$-convex functions.

Let $\h$ be a Hilbert space endowed with the scalar product $\langle\cdot,\cdot\rangle_\h$ and the norm $|\cdot|_\h$ and let $\lambda>0$.
We say that a functional $\F : \h \rightarrow (-\infty,+\infty] $ is $\lambda$-convex if the functional $\F(\cdot)+ \frac{\lambda}{2} \vert \cdot \vert^{2}_{\h}$ is convex.
Moreover, we say that $\F$ is $\lambda$-positive if $\F(x)+ \frac{\lambda}{2} \vert x \vert^{2}_{\h} \geq 0$ for every $x \in \h$.

In the sequel, we will write $\D(\F)= \{ x\in\h \colon \F(x)\neq + \infty\}$.
For every $x\in\D(\F)$, $\partial\F(x)$ will denote the \emph{subdifferential} of $\F$ at $x$, namely the set
$$
\partial\F(x) := \Big\{v\in\h \colon \liminf_{y\to x} \frac{\F(y)-\F(x)-\langle v,y-x\rangle_\h}{|y-x|_\h }\ge0 \Big\}.
$$

\begin{theorem}\label{genstab}
Let $\{\F^n\}_{n\in\N}$ with $\F^n:\h\to(-\infty,+\infty]$ for every $n\in\N$ be a sequence of proper, strongly lower semicontinuous functionals which are $\lambda$-convex and $\lambda$-positive, for some $\lambda>0$ independent of $n$.
Let $\{x_0^n\}_{n\in\N}\subset\h$ be such that $x_0^n\in\D(\F^n)$ for every $n\in\N$,
$$
\sup_{n\in\N}\F^n(x_0^n)<+\infty,
$$
and $x_0^n\to x^\infty_0$ for some $x^\infty_0\in\h$.
Assume that one of the following statements is satisfied:
		\begin{itemize}
			\item[(a)] The functionals $\F^n$ are (i.e., if the sublevels of the functions $\F^n$ are equibounded) and Mosco-converge to some proper functional $\F^\infty$ with respect to the weak/strong $\h$-convergence (as $n\to +\infty$)\,. 
			\item[(b)] The functionals $\F^n$ $\Gamma$-converge to some proper functional $\F^\infty$ with respect to the strong $\h$-convergence (as $n\to +\infty$) and every sequence $\{y^n\}_{n\in\N}\subset\h$ with
$$
\sup_{n\in\N}\F^n(y^n)+\frac{\lambda}{2}|y^n|_\h^2<+\infty,
$$
admits a strongly convergent subsequence.
		\end{itemize}
		Then, $x_0^\infty\in\D(\F^{\infty})$ and, for every $T>0$, there exists a unique solution $x^n$ to the problem
		\begin{equation}\label{caun}
			\begin{cases}
				\dot{x}(t)\in -\partial \F^n(x(t))  \qquad \text{for a.e.\ }t \in (0,T)\,, \\
				x(0)=x_0^n
			\end{cases}
		\end{equation}
		for every $n\in\N$, there exists a unique solution $x^\infty$ to the problem
		\begin{equation}\label{cauninf}
			\begin{cases}
				\dot{x}(t)\in -\partial \F^\infty(x(t)) \qquad \text{for a.e.\ }t \in (0,T)\,, \\
				x(0)=x^\infty_0\,
			\end{cases}
		\end{equation}   
		and $x^n$ weakly converge to $x^\infty$, as $n\to+\infty$, in $H^1([0,T];\h)$.
		Furthermore, if 
		\begin{equation}\label{reco0}
			\lim_{n\to +\infty} \F^n(x^n_0)=\F^\infty(x^\infty_0)\,,
		\end{equation} 
		then,  we have that 
		\begin{equation}\label{strongc}
			x^n\to x^\infty\qquad\textrm{(strongly) in }H^1([0,T];\h)\quad\textrm{ for every }T>0\,,
		\end{equation}
		\begin{equation}\label{alwaysrs}
			x^n(t)\overset{\h}{\to}x^{\infty}(t)\quad\textrm{and}\quad\F^n(x^n(t))\to\F^{\infty}(x^{\infty}(t))\qquad\textrm{for every }t\ge 0\,. 
		\end{equation}
	\end{theorem}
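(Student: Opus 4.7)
The plan is to deduce Theorem~\ref{genstab} by combining the Brezis--Komura well-posedness theory for gradient flows of $\lambda$-convex functionals with the variational (Sandier--Serfaty) passage to the limit via energy-dissipation identities, so that the gradient-flow construction commutes with the $\Gamma$/Mosco limit.

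First I would establish well-posedness of \eqref{caun} for each $n$ and of \eqref{cauninf} in the limit. Since each $\F^n$ is proper, lower semicontinuous and $\lambda$-convex, the shifted functional $\F^n(\cdot)+\tfrac{\lambda}{2}|\cdot|_\h^2$ is convex and lower semicontinuous, its subdifferential is maximal monotone, and Brezis--Komura theory produces a unique solution $x^n\in H^1_{\loc}([0,+\infty);\h)$ to \eqref{caun}. The same machinery applies to $\F^\infty$ once one verifies that it inherits lower semicontinuity, $\lambda$-convexity and $\lambda$-positivity from the sequence: these properties pass to the $\Gamma$-limit via recovery sequences at convex combinations together with the $\Gamma$-liminf inequality. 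The condition $x_0^\infty\in\D(\F^\infty)$ then follows from the $\Gamma$-liminf inequality applied to $x_0^n\to x_0^\infty$ together with $\sup_n\F^n(x_0^n)<+\infty$.

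Next, I would derive the uniform $H^1$ compactness of $\{x^n\}_{n\in\N}$. The energy-dissipation equality for $\lambda$-convex gradient flows,
\begin{equation*}
\F^n(x^n(T)) + \int_0^T |\dot{x}^n(t)|_\h^2\,\de t = \F^n(x_0^n),
\end{equation*}
combined with $\lambda$-positivity and a Grönwall estimate on $t\mapsto|x^n(t)|_\h^2$, yields a uniform bound on $\{x^n\}_{n\in\N}$ in $H^1([0,T];\h)$ as well as on $\sup_{t\in[0,T]}\bigl(\F^n(x^n(t))+\tfrac{\lambda}{2}|x^n(t)|_\h^2\bigr)$. Up to subsequences, $x^n\weak x^\infty$ in $H^1([0,T];\h)$ and, by extraction, $x^n(t)\weak x^\infty(t)$ in $\h$ for every $t\in[0,T]$. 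Both hypotheses (a) and (b) then upgrade this pointwise weak convergence to strong $\h$-convergence at each $t$: under (a) via equicoercivity combined with the uniform energy bound, and under (b) directly from the compactness of the shifted sublevels of $\F^n+\tfrac{\lambda}{2}|\cdot|_\h^2$.

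The main obstacle is identifying $x^\infty$ as the gradient flow of $\F^\infty$. I would follow the Sandier--Serfaty scheme: starting from the energy-dissipation equality for $x^n$ with respect to $\F^n$, the $\Gamma$-liminf inequality together with Fatou's lemma produces the energy-dissipation \emph{in}equality for $x^\infty$ with respect to $\F^\infty$,
\begin{equation*}
\F^\infty(x^\infty(T)) + \int_0^T |\dot{x}^\infty(t)|_\h^2\,\de t \le \liminf_{n\to+\infty}\F^n(x_0^n),
\end{equation*}
where the strong pointwise convergence established above is precisely what is needed to pass the $\Gamma$-liminf at time $T$. By the De Giorgi chain rule for $\lambda$-convex lower semicontinuous functionals, any absolutely continuous curve satisfies the opposite energy-dissipation inequality, and combining the two forces $x^\infty$ to solve \eqref{cauninf}. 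Uniqueness for \eqref{cauninf} via the Brezis--Komura $\lambda$-contraction then propagates the convergence to the whole sequence. Finally, under the well-preparedness condition \eqref{reco0} one has $\F^\infty(x_0^\infty)=\lim_n\F^n(x_0^n)$, so that all $\liminf$'s above become true limits; in particular $\int_0^T|\dot{x}^n(t)|_\h^2\,\de t\to\int_0^T|\dot{x}^\infty(t)|_\h^2\,\de t$, which combined with the weak $H^1$-convergence upgrades to the strong $H^1$-convergence \eqref{strongc}, and \eqref{alwaysrs} follows from the energy identity applied at each $T$.
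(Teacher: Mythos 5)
First, a point of comparison: the paper does not prove this theorem at all — it is quoted from \cite[Theorem 3.8]{crismale2023variational} — so there is no in-paper argument to measure against. Your route (Brezis--Komura well-posedness for the $\lambda$-convex flows plus a Sandier--Serfaty passage to the limit through energy-dissipation identities) is indeed the strategy of the cited reference, and most of your steps are sound: well-posedness of \eqref{caun}, inheritance of lower semicontinuity, $\lambda$-convexity and $\lambda$-positivity by the limit functional, the uniform $H^1([0,T];\h)$ bound via $\lambda$-positivity and Gr\"onwall, and the deduction of \eqref{alwaysrs} from \eqref{strongc}.

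There are, however, two genuine gaps in the identification step. First, you pass to the limit in the dissipation written as $\int_0^T|\dot x^n|_\h^2\,\de t$, for which weak lower semicontinuity suffices; but the De Giorgi principle you invoke to conclude $\dot x^\infty\in-\partial\F^\infty(x^\infty)$ requires the symmetric inequality $\F^\infty(x^\infty(T))+\tfrac12\int_0^T|\dot x^\infty|_\h^2\,\de t+\tfrac12\int_0^T|\partial^o\F^\infty(x^\infty)|_\h^2\,\de t\le\F^\infty(x_0^\infty)$, and the slope half needs the separate estimate $\liminf_n|\partial^o\F^n(x^n(t))|_\h\ge|\partial^o\F^\infty(x^\infty(t))|_\h$. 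For uniformly $\lambda$-convex functionals this follows from the $\Gamma$-limsup inequality via the duality formula for the slope, but it must be stated and used: with only $\int|\dot x|^2$ on the left, nothing forces the limit curve to be a gradient flow. Second, and more seriously, the Sandier--Serfaty scheme closes only when the right-hand side $\liminf_n\F^n(x_0^n)$ equals $\F^\infty(x_0^\infty)$, i.e.\ only under the well-preparedness \eqref{reco0}; yet the theorem asserts the weak $H^1$ convergence to the solution of \eqref{cauninf} \emph{without} assuming \eqref{reco0}. To cover that case you must either replace $x_0^n$ by a recovery sequence $\tilde x_0^n$ with $\F^n(\tilde x_0^n)\to\F^\infty(x_0^\infty)$ and compare the two flows through the uniform contraction estimate $|x^n(t)-\tilde x^n(t)|_\h\le e^{\lambda t}|x_0^n-\tilde x_0^n|_\h$, or argue via convergence of the resolvents of $\F^n+\tfrac{\lambda}{2}|\cdot|_\h^2$ (Attouch's theorem). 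A minor further point: under hypothesis (a), equicoercivity yields only \emph{weak} compactness of $x^n(t)$ in $\h$, not the strong convergence you claim at that stage; the argument still closes there because the Mosco-liminf inequality is valid along weakly convergent sequences.
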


We now state the following general result which connects the notion of first variation with that of \emph{subgradient} (\emph{i.e.}\ an element of the subdifferential).
For a proof of the result below we refer to \cite[Proposition 3.7]{crismale2023variational}.
For every vector space $\mathscr{V}$ we denote by $\mathscr{V}^*$ the algebraic dual space of $\mathscr{V}$ and by $\mathscr{V}'$ the topological dual space of $\mathscr{V}$. 

	\begin{proposition}\label{singleton}
	Let $\F:\h\to(-\infty,+\infty]$ be a proper lower semicontinuous functional which is $\lambda$-convex, for some $\lambda>0$ and let $x\in\D(\F)$.
	Let $\hat\h$ be a dense subspace of $\h$.
	If there exists $T\in(\hat\h)^*$ such that 
		\begin{equation}\label{grad}
		\lim_{t\to 0}\frac{\F(x+ty)-\F(x)}{t}=T(y)\qquad\textrm{for every }y\in\hat\h\,,
		\end{equation}
	then, either $\partial\F(x)=\emptyset$ or $\partial\F(x)=\{v\}$, where $v$ is the (unique) element in $\h$  satisfying $T(y)=\langle v,y\rangle_\h$ for every $y\in\hat\h$.
	In particular, $T\in (\hat\h)'$ and $v$ is its  unique continuous extension to $\h'$.
	\end{proposition}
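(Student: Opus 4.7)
My plan is to reduce the problem to the global subgradient inequality satisfied by $\lambda$-convex functions, and then to identify the value of $\langle v,\cdot\rangle_\h$ on the dense subspace $\hat\h$ by testing along rays through $x$.

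First, I would establish the following standard fact: if $\F$ is proper, lower semicontinuous and $\lambda$-convex, then $v\in\partial\F(x)$ if and only if
$$\F(y) \geq \F(x) + \langle v, y-x\rangle_\h - \frac{\lambda}{2}|y-x|_\h^2 \qquad\text{for every } y \in \h.$$
The direction ``$\Leftarrow$'' follows at once from the definition by dividing by $|y-x|_\h$. For the converse, the auxiliary functional $g := \F + \frac{\lambda}{2}|\cdot|_\h^2$ is convex, lower semicontinuous and proper by the $\lambda$-convexity assumption; a direct computation shows that $v \in \partial\F(x)$ is equivalent to $v + \lambda x$ being in the Fréchet subdifferential of $g$ at $x$; finally, for convex functions the Fréchet subdifferential coincides with the convex one, thanks to the monotonicity of the difference quotients of $g$ along segments issuing from $x$.

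Having this equivalence at hand, fix $v \in \partial\F(x)$. For every $y_0 \in \hat\h$ and every $t \in \R\setminus\{0\}$, substituting $y := x + t y_0$ in the subgradient inequality yields
$$\F(x+ty_0) - \F(x) \geq t\langle v, y_0\rangle_\h - \frac{\lambda t^2}{2}|y_0|_\h^2.$$
Dividing by $t>0$ and passing to the limit $t\to 0^+$, the hypothesis \eqref{grad} gives $T(y_0) \geq \langle v, y_0\rangle_\h$; dividing instead by $t<0$ reverses the inequality, and the limit $t\to 0^-$ gives $T(y_0) \leq \langle v, y_0\rangle_\h$. Hence $T(y_0) = \langle v, y_0\rangle_\h$ for every $y_0\in\hat\h$.

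The remaining step concerns uniqueness and the continuity assertion. If $v, v' \in \partial\F(x)$, then by the previous identity $\langle v - v', y_0\rangle_\h = 0$ for every $y_0\in\hat\h$; density of $\hat\h$ in $\h$ combined with continuity of the inner product forces $v = v'$, so that $\partial\F(x)$ is either empty or a singleton. Moreover, in the non-empty case the map $y\mapsto\langle v, y\rangle_\h$ is continuous on the whole of $\h$ and coincides with $T$ on the dense subspace $\hat\h$; consequently $T\in(\hat\h)'$ and the Riesz representative $v$ is its unique continuous extension to an element of $\h'$. The main technical point in the plan is the first step, namely upgrading the one-sided liminf definition of the Fréchet subdifferential to a global subgradient inequality with the $-\frac{\lambda}{2}|y-x|_\h^2$ correction; once this is in place, the remainder reduces to elementary one-dimensional analysis together with the density of $\hat\h$.
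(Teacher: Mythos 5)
Your proof is correct and is essentially the argument the paper relies on: the paper does not prove this proposition itself but defers to Proposition 3.7 of the cited work of Crismale et al., where the same route is taken. Namely, one passes from the Fr\'echet subdifferential of the $\lambda$-convex $\F$ to the convex subdifferential of $\F+\tfrac{\lambda}{2}|\cdot|_\h^2$ to obtain the global inequality $\F(y)\ge\F(x)+\langle v,y-x\rangle_\h-\tfrac{\lambda}{2}|y-x|_\h^2$, and then tests along the rays $x+ty_0$, $y_0\in\hat\h$, letting $t\to0^\pm$ to identify $\langle v,y_0\rangle_\h=T(y_0)$, with uniqueness and the extension claim following from density of $\hat\h$.
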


\subsection{Convergence of the $\G_s^1$ gradient flows to the gradient flows of $\G^1$}
Our strategy to prove the convergence of the gradient flows of the $\G_s^1$ energies is to prove that the family $\{\G_s^1\}_{s\in(\frac{1}{2},1)}$ satisfies the hypotheses of Theorem \ref{genstab}.
To this end, we only need to prove the uniform $\lambda$-positivity and $\lambda$-convexity as the hypothesis that energy $\G_s^1$ converge in sense of Mosco to $\G^1$ is already proved in the first part of the article.

	\begin{lemma}\label{lambdapositiveconverxGs}
	For every $\lambda> 8 N\omega_N$, the functionals $ \G_s^1$ are $\lambda$-positive and $\lambda$-convex for every $s \in \left(\frac{1}{2},1\right)$.
	\end{lemma}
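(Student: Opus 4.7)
The key observation is that $\G^1_s$ is a quadratic functional on $L^2(\Omega)$, so $\lambda$-convexity and $\lambda$-positivity are equivalent and it suffices to establish the latter. The quadratic structure is transparent from Lemma \ref{lem:Gs-dec}, which expresses $\G^1_s$ as a sum of three quadratic forms, namely $-\frac{N\omega_N}{s}\|u\|_{L^2(\R^N)}^2$, the bilinear tail $2\int_{\R^N}\int_{B(0,1)^c} u(x+h)u(x)|h|^{-N-2s}\de h\de x$, and $\mathcal{J}_s(u)$. For any quadratic form $Q$, a direct expansion gives $Q(tu+(1-t)v) - tQ(u) - (1-t)Q(v) = -t(1-t)Q(u-v)$, so $Q$ is convex if and only if $Q\ge 0$. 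Applying this to $Q=\G^1_s + \frac{\lambda}{2}\|\cdot\|_{L^2(\R^N)}^2$ reduces the claim to proving $\lambda$-positivity of $\G^1_s$ for $\lambda>8N\omega_N$.

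For the lower bound, Lemma \ref{monotonicitylemma} yields $\mathcal{J}_s(u)\ge 0$, while \eqref{01112023matt1} controls the cross term:
\[
\Big| 2 \int_{\R^N}\int_{B(0,1)^c} \frac{u(x+h)u(x)}{|h|^{N+2s}} \de h\de x \Big| \leq \frac{N\omega_N}{s}\|u\|_{L^2(\R^N)}^2.
\]
Combining these with the first summand in \eqref{eq:Gs-dec} gives $\G^1_s(u) \geq -\frac{2N\omega_N}{s}\|u\|_{L^2(\R^N)}^2$. The hypothesis $s>1/2$ then yields $\frac{2N\omega_N}{s} < 4N\omega_N$, so that $\G^1_s(u) + \frac{\lambda}{2}\|u\|_{L^2(\R^N)}^2 \geq 0$ uniformly in $s\in(1/2,1)$ whenever $\lambda>8N\omega_N$.

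There is no real obstacle here: all the ingredients—the decomposition of Lemma \ref{lem:Gs-dec}, the nonnegativity of $\mathcal{J}_s$ from Lemma \ref{monotonicitylemma}, and the H\"older estimate \eqref{01112023matt1}—were prepared precisely for this purpose in the previous section. The only subtlety worth noting is that it is essential to use $s>1/2$ to make the constant $\frac{2N\omega_N}{s}$ uniformly bounded by $4N\omega_N$; this is the reason the statement is restricted to the range where the abstract Theorem \ref{genstab} can be applied to the family $\{\G^1_s\}$.
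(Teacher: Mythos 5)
Your proof is correct and follows essentially the same route as the paper's: both rest on the decomposition of Lemma \ref{lem:Gs-dec}, the nonnegativity of $\mathcal{J}_s$ from Lemma \ref{monotonicitylemma}, and the estimate \eqref{01112023matt1}, with $s>\frac{1}{2}$ supplying the uniform constant. The only (cosmetic) difference is that you deduce $\lambda$-convexity from $\lambda$-positivity via the identity for quadratic forms, whereas the paper carries out the equivalent computation explicitly by checking that the second derivative of $t\mapsto \G^1_s(u+tv)+\tfrac{\lambda}{2}\|u+tv\|_{L^2}^2$ is nonnegative, which amounts to evaluating the same quadratic form at $v$.
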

	\begin{proof}
	We start by noticing that, by decomposition \eqref{eq:Gs-dec} and by the positivity result provided by Lemma \ref{monotonicitylemma}, in order to prove the $\lambda$-positivity of $\G^1_s$ it is sufficient to prove that
	$$
	\Big(\frac{\lambda}{2}-\frac{N\omega_N}{s}\Big)\int_{\R^N}|u(x)|^2 \de x+ 2\int_{\R^N}\int_{B(0,1)^c}\frac{u(x+h)u(x)}{|h|^{N+2s}} \de h \de x\ge 0.
	$$
	This, in turn, is an immediate consequence of \eqref{01112023matt1} and the fact that $s\ge\frac{1}{2}$.

	Now we show that the functionals $\G_s^1$ are $\lambda$-convex for every $s \in \left(\frac{1}{2},1\right)$.
	For every $u,v\in L^2(\Omega)$ we define the function $f:\R\to\R$ as
	$$
	f(t):=\Big(\frac{\lambda}{2}-\frac{N\omega_N}{s}\Big)\int_{\R^N}|u(x)+t v(x)|^2 \de x+ 2\int_{\R^N}\int_{B(0,1)^c}\frac{(u(x+h)+t v(x+h))(u(x)+t v(x))}{|h|^{N+2s}} \de h \de x
	$$
	and claim that $\frac{d^2}{d t^2}f(t) \geq0$ for every $t \in \R$. 
	Indeed, again by \eqref{01112023matt1} we obtain
	\begin{align*}
	\frac{d^2}{d t^2}f(t) &= \Big(\lambda-\frac{2N\omega_N}{s}\Big)\int_{\R^N}|v(x)|^2\de x+4\int_{\R^N}\int_{B(0,1)^c}\frac{v(x+h)v(x)}{|h|^{N+2s}} \de h \de x \\
	& \ge \big(\lambda-8N \omega_N\big) \int_{\R^N}|v(x)|^2\de x \ge 0.
	\end{align*}
	With analogous computations, defining $g:\R\to\R$ as $g(t):=\mathcal{J}_s(u+tv)$ we obtain, as a consequence again of Lemma \ref{monotonicitylemma},
	$$
	\frac{d^2}{d t^2}g(t) = 2 \mathcal{J}_s(v,v)\geq 0.
	$$
	Thanks to decomposition \eqref{eq:Gs-dec}, the two inequalities above then yield the result.
	\end{proof}

	We have now everything in place to prove the convergence of the gradient flows corresponding to the $\G_s^1$ functionals as $s \rightarrow 1$.
	The strategy of the proof follows \cite[Theorems 4.4--4.6]{crismale2023variational} and \cite[Theorems 4.2--4.6]{de2023parabolic}.

	\begin{theorem}\label{convheat1ordsto0}
	Let $\{s_n\}_{n\in\N}\subset (0,1)$ be such that $s_n\to 1^-$ as $n\to +\infty$.
	Let $u^\infty_0\in L^2(\Omega)$ and let $\{u^{n}_0\}_{n\in\N}\subset H_0^1(\Omega)$ be such that $\sup_{n\in\N}\G^1_{s_n}(u^n_0)<+\infty$ and $u^n_0\to u^\infty_0$ in $L^2(\Omega)$.
	Then, for every $n\in\N$ there exists a unique solution $u^n\in H^1_{loc}([0,+\infty); L^2(\Omega))\cap L^\infty([0,+\infty);H^1_0(\Omega))$ to
	\begin{equation}\label{cauchyord1n}
		\begin{dcases}
		u_t(t) = -\frac{\omega_N(-\Delta)-\frac{4(1-s)}{C(N,s)}(-\Delta)^s}{1-s} u(t) \qquad\textrm{for a.e.\ }t\in (0,+\infty)\,, \\
		u(0)=u^{n}_0\,,
		\end{dcases}
	\end{equation}
	such that  $u^n(t)\in H^1_0(\Omega)$ for  $a.e.\, t\ge 0$.
	Moreover, $u^\infty_0\in\mathfrak{D}(\G^1)$ and, for every $T>0$,  $u^n\to u^\infty$ in the weak topology of $H^1([0,T];L^2(\Omega))$ as $n\to +\infty$\,, where $u^\infty\in H^1_{loc}([0,+\infty);L^2(\Omega))\cap L^\infty([0,+\infty);H^1_0(\Omega))$ is the unique solution to
	\begin{equation}\label{cauchyord1infty}
	\begin{dcases}
		u_t(t) =-\mathfrak{L}  u(t)\qquad\textrm{for a.e.\ }t\in (0,+\infty) \\
		u(0)=u^\infty_0\,,
	\end{dcases}
	\end{equation}
	and such that  $ u^\infty(t)\in \mathfrak{D}(\G^1)$ for $a.e. \,t\ge 0$\,.
	Furthermore, if
		$$
		\lim_{n\to +\infty} \G_{s_n}^1(u^n_0)= \G^1(u^\infty_0)\,,
		$$
		then,  $u^n\to u^\infty$ (strongly) in $H^1([0,T];L^2(\Omega))$ for  every  $T>0$\,, and
		$$
		\|u^n(t)-u^\infty(t)\|_{L^2(\Omega)}\to 0\quad\textrm{and}\quad \G_{s_n}^1(u^n(t))\to \G^1(u^\infty(t))\qquad\textrm{for every }t\ge 0\,.
		$$
	\end{theorem}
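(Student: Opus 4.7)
The plan is to apply the abstract stability Theorem \ref{genstab} with Hilbert space $\h = L^2(\Omega)$, $\F^n = \G^1_{s_n}$, and $\F^\infty = \G^1$, and then to identify the abstract subdifferential flow with the concrete PDEs in \eqref{cauchyord1n} and \eqref{cauchyord1infty} via Proposition \ref{singleton}. Up to discarding finitely many indices, I may assume $s_n \in (1/2, 1)$.

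First, I would verify the hypotheses of Theorem \ref{genstab}. Strong $L^2$-lower semicontinuity of each $\G^1_{s_n}$ follows from Proposition \ref{proplsc+com+sfix}, since weak $L^2$-lower semicontinuity is a stronger property. Uniform $\lambda$-convexity and $\lambda$-positivity for any $\lambda > 8 N \omega_N$ are provided by Lemma \ref{lambdapositiveconverxGs}. The Mosco-convergence $\G^1_{s_n} \to \G^1$ is Theorem \ref{thm:main}. To invoke alternative (b) of Theorem \ref{genstab}, I observe that any sequence $\{u^n\}_{n \in \N}$ with $\sup_n \bigl(\G^1_{s_n}(u^n) + \tfrac{\lambda}{2}\|u^n\|_{L^2(\Omega)}^2\bigr) < +\infty$ satisfies the hypotheses of Proposition \ref{compcompcompthm}; hence $\{u^n\}$ converges (up to subsequences) weakly in $H^1_0(\Omega)$, and therefore strongly in $L^2(\Omega)$ by the Rellich--Kondrachov compact embedding. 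With these ingredients and the hypothesis $\sup_n \G^1_{s_n}(u^n_0) < +\infty$, $u^n_0 \to u^\infty_0$ in $L^2(\Omega)$, Theorem \ref{genstab} directly yields existence and uniqueness of the abstract gradient-flow solutions, the weak $H^1([0,T]; L^2(\Omega))$ convergence $u^n \to u^\infty$, and the strong convergence statements \eqref{strongc}--\eqref{alwaysrs} under the matching-energy condition on the initial data.

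It then remains to identify the abstract subdifferentials with the operators appearing in \eqref{cauchyord1n} and \eqref{cauchyord1infty}. I would apply Proposition \ref{singleton} with the dense subspace $\hat \h = C^\infty_c(\Omega) \subset L^2(\Omega)$. For $u \in H^1_0(\Omega) = \D(\G^1_{s_n})$, formula \eqref{06112023matt1} exhibits the Gâteaux derivative of $\G^1_{s_n}$ at $u$ along every $\phi \in C^\infty_c(\Omega)$ as the pairing $\bigl\langle u, \bigl(\omega_N(-\Delta) - \tfrac{4(1-s_n)}{C(N,s_n)}(-\Delta)^{s_n}\bigr)\phi / (1-s_n)\bigr\rangle$, so Proposition \ref{singleton} forces $\partial \G^1_{s_n}(u)$ to be empty or to coincide with $\bigl\{\bigl(\omega_N(-\Delta) - \tfrac{4(1-s_n)}{C(N,s_n)}(-\Delta)^{s_n}\bigr)u / (1-s_n)\bigr\}$, interpreted in duality. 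The analogous identification $\partial \G^1(u) = \{\mathfrak{L} u\}$ on $\D(\G^1)$ follows from Lemma \ref{varprimaG^1}. Since the theory of gradient flows for $\lambda$-convex functionals guarantees $-\dot u(t) \in \partial \F(u(t))$ for a.e.\ $t$, this automatically forces $u^n(t) \in H^1_0(\Omega)$ and $u^\infty(t) \in \D(\G^1)$ for a.e.\ $t \geq 0$, so the abstract evolutions become precisely \eqref{cauchyord1n} and \eqref{cauchyord1infty}.

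The main obstacle I foresee is the clean identification step: one has to make sure that on each element $u$ of the effective energy domain the linear form $\phi \mapsto \langle u, A\phi \rangle$ (with $A$ the relevant operator) extends to a continuous functional on $L^2(\Omega)$, so that Proposition \ref{singleton} yields a bona fide $L^2$-representative and not merely a distribution. Once this continuity is established, everything else is a bookkeeping exercise linking Theorem \ref{genstab}, Theorem \ref{thm:main}, Lemma \ref{lambdapositiveconverxGs}, and Proposition \ref{compcompcompthm}, and the strong-convergence clause under $\lim_n \G^1_{s_n}(u^n_0) = \G^1(u^\infty_0)$ is exactly \eqref{strongc}--\eqref{alwaysrs} transported back through this identification.
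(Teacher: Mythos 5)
Your proposal follows essentially the same route as the paper's proof: verify lower semicontinuity (Proposition \ref{proplsc+com+sfix}), uniform $\lambda$-convexity and $\lambda$-positivity (Lemma \ref{lambdapositiveconverxGs}), and Mosco-convergence (Theorem \ref{thm:main}); identify the subdifferentials via Proposition \ref{singleton} with $\hat\h=C^\infty_c(\Omega)$ using \eqref{06112023matt1} and Lemma \ref{varprimaG^1}; then invoke Theorem \ref{genstab}. The only point you leave implicit is the $L^\infty([0,+\infty);H^1_0(\Omega))$ bound on the trajectories, which the paper obtains from the decrease of the energy along the flow combined with Proposition \ref{compcompcompthm} — a minor addition that fits naturally into your scheme.
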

	\begin{proof}
	By the very definition of $ \G_s^1$, see \eqref{G_s^1}, the domain of $\G_{s_n}^1$ is $H_0^1(\Omega)$ for all $n \in \N$ and from Proposition \ref{proplsc+com+sfix} $\G^1_{s_n}$ is lower semicontinuous with respect to the strong $L^2$ topology.
	By Lemma \ref{lambdapositiveconverxGs} $\G^1_{s_n}$ is $\lambda$-positive and $\lambda$-convex for every $\lambda > 4N \omega_N+4 \vert \Omega \vert$; here we can assume for simplicity and without loss of generality that $s_n\ge\frac{1}{2}$ for every $n\in\N$.

	Now by formula \eqref{06112023matt1} and Proposition \ref{singleton} applied with $ \mathcal{F}= \G^1_{s_n}$, $ \h= L^2(\Omega)$ and $ \hat{\h}= C_c^{\infty}(\Omega)$ we have that for every $ u \in H^1_0(\Omega)$, either $ \partial \G_{s_n}^1(u)= \emptyset$ or
	$$
	\partial \G_{s_n}^1(u)=\Big\{\frac{\omega_N(-\Delta)-\frac{4(1-s)}{C(N,s)}(-\Delta)^s}{1-s}u\Big\},
	\quad \text{with} \quad
	\frac{\omega_N(-\Delta)-\frac{4(1-s)}{C(N,s)}(-\Delta)^s}{1-s} u \in L^2(\Omega).
	$$
	Furthermore, by Lemma \ref{varprimaG^1} and by Proposition \ref{singleton} applied with $ \mathcal{F}= \G^1$, $\h=L^2(\Omega)$ and $ \hat{\h}= C_c^{\infty}(\Omega)$ we have that every $ u \in \mathfrak{D}(\G^1)$, either $ \partial \G^1(u)= \emptyset $ or $\partial \G^1(u)= \{\mathfrak{L}  u\}$ with $ \mathfrak{L}  u \in L^2(\Omega)$.

	By Theorem \ref{genstab} the solutions to problems \eqref{cauchyord1n} and \eqref{cauchyord1infty} exist and are unique and belong to $L^2(\Omega)$ for $a.e.$ $t\ge0$.
	The fact that $\sup_{t\in[0,+\infty)}\|u^n(t)\|_{H^1}<+\infty$ is a direct consequence of Proposition \ref{compcompcompthm} and the fact that
	$$
	\sup_n \G^1_{s_n}(u^n(t))\le \sup_n \G^1_{s_n}(u_0^n)<+\infty.
	$$
	This together with Theorem \ref{thm:main} (i) yields also that $\sup_{t\in[0,+\infty)}\|u^\infty(t)\|_{H^1}<+\infty$.
	Eventually, the stability claim follows by applying Theorem \ref{genstab} with $\h=L^2(\Omega)$, $ \mathcal{F}_n= \G_{s_n}^1$ and $ \mathcal{F}^{\infty}= \G^1$, once noticed that, in view of Theorem \ref{thm:main},both assumptions $(a)$ and $(b)$ of Theorem \ref{genstab} are satisfied.
	\end{proof}

	\subsection*{Acknowledgements}  
		A. K. is supported by the DFG Collaborative Research Center TRR 109 “Discretization in Geometry and Dynamics” and by the Academy of Finland grant 314227. 
	V.P. acknowledges support by the Austrian Science Fund (FWF) through projects
	\href{https://doi.org/10.55776/I4052}{10.55776/I4052} and \href{https://doi.org/10.55776/Y1292}{10.55776/Y1292},
	as well as from OeAD through the WTZ grant CZ 09/2023.
	VP was a member of INdAM-GNAMPA and	acknowledges support from the project CUP\_E53C22001930001.
	A.T.\ acknowledges funding by the Deutsche Forschungsgemeinschaft (DFG, German Research Foundation) through SPP 2256, project ID 441068247.

	
	{\normalsize
	\bibliography{biblio}
	}
\end{document}